\newtheorem{theorem}{Theorem}[section]
\newtheorem{lemma}[theorem]{Lemma}
\newtheorem{proposition}[theorem]{Proposition}
\newtheorem{definition}[theorem]{Definition}
\begin{document}
\ \begin{center}
\begin{LARGE}
\bf {A model of individual clustering\\
with vanishing diffusion} \\
\end{LARGE}
\vspace{0.5cm}
Elissar Nasreddine\\
\vspace{0.2cm}
 \begin{small}
\textit{Institut de Math\'ematiques de Toulouse, Universit\'e de Toulouse,}\\
 \textit{F--31062 Toulouse cedex 9, France}\\
 \vspace{0.1cm}
 e-mail: elissar.nasreddine@math.univ-toulouse.fr\\
 \today
\end{small}
\end{center}
\textbf{Abstract}. We consider a model of individual clustering with two specific reproduction rates and small diffusion parameter in one space dimension. It consists of a drift-diffusion equation for the population density coupled to an elliptic equation for the velocity of individuals. We prove the convergence (in suitable topologies) of the solution of the problem to the unique solution of the limit transport problem, as the diffusion coefficient tends to zero.
\section{Introduction}
In \cite{models}, a model for the dispersal of individuals with an additional aggregation mechanism is proposed. More precisely, the population density $u(t,x)$ at location $x\in \Omega$ where $\Omega$ is an open bounded domain of $\mathbb{R}^N$, $1\leq N\leq3$, and time $t>0$ solves the convection-diffusion equation
\begin{equation}\label{in1.4}
\partial_t u= \delta\ \Delta u-\nabla\cdot(u\ \bm{\omega})+r\ u\ E(u),
\end{equation}
where $\delta>0$, $r\geq0$ and $E$ is the net rate reproduction per individual .
This equation is coupled to an elliptic equation for the velocity $\bm{\omega}$ which is assumed to be in the direction of increasing $E(u)$, say, of the form $\lambda\nabla E(u)$ with $\lambda >0$. The evolution of the velocity $\bm{\omega}$ is described by 
\begin{equation}\label{in2.4}
-\varepsilon\ \Delta \bm{\omega}+\bm{\omega}=\lambda\ \nabla E(u),
\end{equation}
where $\varepsilon>0$ and $\varepsilon \ \Delta\bm{\omega}$ is simply to smooth out any sharp local variation in $ \nabla E(u)$ so that $\bm{\omega}$ represents a local average of the velocity $\lambda\ \nabla E(u)$.\\
We supplement \eqref{in1.4} and \eqref{in2.4} with no-flux boundary conditions
\begin{equation}\label{in3.4}
n\cdot\nabla u=n\cdot\bm{\omega}=0, \ x\in \partial\Omega, t\geq0,
\end{equation}
as suggested in \cite{models} where $n$ is the outward normal of $\partial \Omega$. In addition, in dimension $2$ or $3$, we impose the following additional condition given in \cite{asimplified,unprobleme,quelques} to guarantee the well-posedness of the elliptic system \eqref{in2.4}
\begin{equation}\label{in4.4}
\partial_n\bm{\omega}\times n=0,\ x\in \partial\Omega, t\geq0.
\end{equation}
As usual, $v\times \omega$ is the number $v_1\ \omega_1+v_2\ \omega_2$ if $N=2$ and the vector field
 $(v_2\ \omega_3-v_3\ \omega_2, -v_1\ \omega_3+v_3\ \omega_1,v_1\ \omega_2-v_2\ \omega_1)$ if $N=3$.\\
 
 We are interested here in the case where the aggregation mechanism is dominant, that is, the diffusivity $\delta$ is small. For biological models, this can change dramatically the dynamical behaviour of the solutions, and might generate finite time blow-up such as for the Keller-Segel system, see \cite{finitetime} for instance. Nevertheless, there are situation for which the small diffusivity limit is somehow ``stable", including some models from semiconductor physics, see  Markowich and Szmolyan \cite{asystem}, and for the Keller-Segel system with volume-filling effect, see \cite{thekeller, existenceof}. There, the authors prove the convergence, in the small diffusivity limit, of the solutions of the parabolic systems to  weak entropy solutions of the corresponding hyperbolic systems. A related field of research which is currently very active is the analysis of the so-called aggregation equation $\partial_tu+\mathrm{div}(K(u))=0$ where $K$ is a nonlocal linear operator, see \cite{localand,globalintime}, and the references therein.\\
  Taking the case of small diffusivity as a motivation, we will study the system \eqref{in1.4}, \eqref{in2.4}, \eqref{in3.4} and \eqref{in4.4}, in the limit of vanishing diffusivity $\delta$. More precisely,
given a sufficiently smooth function $E$, parameters $\delta\in (0,1)$, $\varepsilon >0$ and $r\geq 0$, our aim in this paper is to investigate the limit $\delta\rightarrow 0$ of the following one dimensional system
\begin{equation}
\label{in5.4}
\left\{
\begin{array}{llll}
\displaystyle \partial_t u_\delta&=& \delta\ \partial_x^2 u_\delta-\partial_x (u_\delta\ \varphi_\delta)+r\ u_\delta\ E(u_\delta),& x\in (-1,1), t>0 \\
\displaystyle -\varepsilon\ \partial_x^2\varphi_\delta+\varphi_\delta&=& \partial_x E(u_\delta),& x\in (-1,1), t>0 \\
\displaystyle \partial_xu_\delta(t,\pm1)&=&\varphi_\delta(t,\pm1)=0,& t>0\\
\displaystyle u_\delta(0,x)&=&u_0(x),& x\in (-1,1).
\end{array}
\right.
\end{equation}
where $E$ has two specific forms suggested in \cite{models}, namely
\begin{equation}
\label{in7.4}
E(u)=1-u
\end{equation}
or 
\begin{equation}
\label{in8.4}
E(u)=(1-u)(u-a),\ \ \mathrm{for\ some}\ a\in (0,1).
\end{equation}
 
Given $u_0\in W^{1,2}(-1,1)$, the existence and uniqueness of a global solution of \eqref{in5.4} have been shown in \cite{well}, and the purpose of this paper is to prove that $(u_\delta, \varphi_\delta)$ converges to a solution of the nonlocal transport problem
\begin{equation}\label{in6.4}
\left\{
\begin{array}{llll}
\displaystyle \partial_t u&=&-\partial_x \left(u\ \varphi\right)+r\ u\ E(u),& \ \ x\in (-1,1), t>0,\\
\displaystyle-\varepsilon\  \partial^2_x \varphi+\varphi&=& \partial_x E(u),& \ \ x\in (-1,1), t>0,\\
\displaystyle \varphi(t,\pm1)&=&0,& t>0,\\
\displaystyle u(0,x)&=&u_0(x),& x\in (-1,1),
\end{array}
\right.
\end{equation}
 as the diffusion coefficient $\delta$ approaches zero. This leads, in a natural way, to the existence of a smooth solution of \eqref{in6.4}, the uniqueness being established in Proposition \ref{pr1.4}.\\
 
 Our paper is organized as follows. In Section $2$, we state the main results and focus on the two specific forms of $E$ suggested in \cite{models}: the ``bistable case" \eqref{in8.4}, see Theorem \ref{th1.4}, and the  ``monostable case" \eqref{in7.4}, see Theorem \ref{th2.4}. In Section $3$, we recall some results of existence and uniqueness of a global solution of \eqref{in5.4} obtained in \cite{well}. Section $4$ is devoted to the uniqueness issue of smooth solutions of the transport problem \eqref{in6.4}. In Section 5, we focus on the bistable case \eqref{in8.4}. We derive a priori estimates on $(u_\delta, \varphi_\delta)$, which are uniformly valid in $\delta$, and particularly we derive a lower bound for $\partial_x\varphi_\delta$  and an $L^\infty(W^{1,1})$ estimate on $u_\delta$ which leads to an $L^\infty(W^{2,2})$ bound on $u_\delta$. these estimates imply, by a compactness argument, the existence of accumulation points of any sequence $(u_\delta, \varphi_\delta)_\delta$. Thanks to Section 3, we conclude that the limit of $(u_\delta, \varphi_\delta)_\delta$ is unique, and the whole family $(u_\delta, \varphi_\delta)$ converges to the unique solution of \eqref{in6.4} with $E(u)=(1-u)(u-a)$. In Section 6, we analyse the limit $\delta\rightarrow 0$ of \eqref{in5.4} in the monostable case \eqref{in7.4}. This analysis is quite similar to that of the previous case, except for the first estimate.
\section{Main results}
Throughout this paper, and unless otherwise stated, we assume that
$$\delta\in (0,1),\  \varepsilon>0,\  r\geq 0.$$
In \cite{well}, the global existence and uniqueness of smooth solution of \eqref{in5.4} are shown when $E(u)$ has the structure \eqref{in7.4} or \eqref{in8.4}. Our first result gives the limit $\delta \rightarrow  0$ in \eqref{in5.4} in the bistable case, that is when $E(u)=(1-u)(u-a)$ for some $a\in(0,1)$.
\begin{theorem}\label{th1.4}
Assume that $u_0$ is a nonnegative function in $W^{1,2}(-1,1)$ and $E(u)=(1-u)(u-a)$ for some $a\in(0,1)$. For $\delta>0$, let $u_\delta$ be the global nonnegative solution to \eqref{in5.4} given by Theorem \ref{th3.4} below. Then, for all $T>0$ 
\begin{equation*}
\lim\limits_{\delta \to 0}||u_{\delta}(t)- u(t)||_C=0\ \ \mathrm{for\ all}\ t\in(0,T),
\end{equation*}
where $u\in C\left([0,T]; L^2(-1,1)\right)\cap L^\infty\left((0,T); W^{1,2}(-1,1)\right)$ is the unique smooth solution of the corresponding transport system 
\begin{equation} \label{we1.4}\partial_t u=-\partial_x \left(u\ \varphi\right)+r\ u\ (1-u)(u-a),\ \ x\in (-1,1), t>0, \end{equation}
\begin{equation}\label{we3.4}-\varepsilon\  \partial^2_x \varphi+\varphi=(-2\ u+a+1)\ \partial_xu,\ \ x\in (-1,1), t>0\end{equation}
with boundary and initial conditions
\begin{equation}
\label{we4.4}
\varphi(t,\pm1)=0,\ \mathrm{for\ all}\ t>0,\ \mathrm{and}\ u(0,x)=u_0(x),\ x\in(-1,1).
\end{equation}
\end{theorem}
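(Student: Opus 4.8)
The plan is to establish a family of a priori bounds on $(u_\delta,\varphi_\delta)$ that are uniform in $\delta\in(0,1)$, to extract accumulation points as $\delta\to0$ by compactness, and to identify any such accumulation point with the unique smooth solution of \eqref{we1.4}--\eqref{we4.4} provided by Proposition \ref{pr1.4}; uniqueness of the limit then upgrades convergence along subsequences to convergence of the whole family. Throughout, the decisive structural feature is that the bistable nonlinearity $E(u)=(1-u)(u-a)$ is a downward parabola, hence bounded from above on $[0,\infty)$, with $uE(u)\to-\infty$ as $u\to+\infty$; this built-in damping of large densities is what tames the aggregation drift in the vanishing-diffusion limit.

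First I would collect the conservation-type estimates. Integrating the first equation of \eqref{in5.4} and using the no-flux and Dirichlet conditions gives $\frac{d}{dt}\int u_\delta = r\int u_\delta E(u_\delta)$, and since $uE(u)$ is bounded above on $[0,\infty)$ this yields an $L^\infty(0,T;L^1)$ bound. The crucial first estimate is the $L^2$ bound: testing with $u_\delta$ and rewriting the drift contribution through \eqref{we3.4}, the leading quartic term produced by $\int u_\delta^2\,\partial_x\varphi_\delta$ must be controlled by the strongly dissipative reaction term $\int u_\delta^2 E(u_\delta)\sim -\int u_\delta^4$. Here one exploits that the operator $f\mapsto\partial_x\varphi$ associated with \eqref{we3.4} splits into a negative local multiple of $E(u_\delta)$ plus a nonlocal part carrying the sign of a positive resolvent, so that the bad quartic term does not exceed the dissipation; a Gronwall argument then gives the $\delta$-uniform $L^\infty(0,T;L^2)$ bound. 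This is precisely the step that has to be redone differently in the monostable case \eqref{in7.4}.

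With the $L^2$ bound in hand I would derive the pointwise lower bound for $\partial_x\varphi_\delta$. Writing $\partial_x\varphi_\delta=-\tfrac1\varepsilon E(u_\delta)+(\text{nonlocal term})$, the local term is bounded below because $E$ is bounded above, while the nonlocal term is controlled through the $L^1$--$L^2$ bounds, giving $\partial_x\varphi_\delta\ge-C$ uniformly in $\delta$. This feeds a maximum-principle argument: at an interior maximum the drift reduces to $-u_\delta\,\partial_x\varphi_\delta$, which together with the dissipative reaction forces $\frac{d}{dt}\|u_\delta\|_\infty<0$ once $\|u_\delta\|_\infty$ is large, yielding a $\delta$-uniform $L^\infty$ bound. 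Differentiating the equation and estimating $\|\partial_x u_\delta\|_{L^1}$ by testing with $\mathrm{sign}(\partial_x u_\delta)$, the transport terms are absorbed exactly thanks to the lower bound on $\partial_x\varphi_\delta$ together with the $L^\infty$ control on $u_\delta$ and $E'(u_\delta)$, which closes an $L^\infty(0,T;W^{1,1})$ estimate; bootstrapping through elliptic regularity for $\varphi_\delta$ then promotes this to the announced $L^\infty(0,T;W^{2,2})$ bound.

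Finally, the uniform $W^{2,2}$ bound gives equicontinuity in $x$, and the equation furnishes a uniform bound on $\partial_t u_\delta$ (e.g. in $L^2$), so an Aubin--Lions/Arzel\`a--Ascoli argument yields, along a subsequence, $u_\delta\to u$ in $C([0,T];C([-1,1]))$ and a corresponding $\varphi_\delta\to\varphi$. Strong convergence lets me pass to the limit in the nonlinear terms $\partial_x(u_\delta\varphi_\delta)$ and $ru_\delta E(u_\delta)$ and in \eqref{we3.4}, while $\delta\,\partial_x^2 u_\delta\to0$ is immediate from the $W^{2,2}$ bound; hence $(u,\varphi)$ solves \eqref{we1.4}--\eqref{we4.4}. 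Since Proposition \ref{pr1.4} guarantees uniqueness of such a solution, every subsequence shares the same limit and the full family converges, giving $\|u_\delta(t)-u(t)\|_C\to0$. I expect the main obstacle to be the first ($L^2$) estimate together with the lower bound on $\partial_x\varphi_\delta$: both hinge on matching the aggregation drift against the dissipation coming from the bistable $E$ and on getting the constants to line up, rather than on the compactness or uniqueness, which are comparatively routine.
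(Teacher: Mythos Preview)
Your overall architecture---$L^1$, then $L^2$, a lower bound on $\partial_x\varphi_\delta$, a $W^{1,1}$ estimate, $L^\infty$, higher regularity, compactness, and identification via Proposition~\ref{pr1.4}---matches the paper's, and the compactness/uniqueness endgame is fine. The gap is in the mechanism you describe for the $L^\infty(L^2)$ bound, which is not the one that actually closes.

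You propose to test only the $u_\delta$-equation with $u_\delta$ and then handle the drift term $-\tfrac12\int u_\delta^2\,\partial_x\varphi_\delta$ by writing $\partial_x\varphi_\delta=-\tfrac1\varepsilon E(u_\delta)+(\text{nonlocal})$ and balancing the resulting quartics against the ``strongly dissipative reaction term'' $r\int u_\delta^2 E(u_\delta)$. Two problems. First, the theorem is stated for $r\ge 0$, so for $r=0$ there is no reaction dissipation at all. Second, even ignoring $r$, the nonlocal part of $\partial_x\varphi_\delta$ contains the constant $-\tfrac{1}{2\varepsilon}\|u_\delta\|_2^2$ (see the explicit formula in Lemma~\ref{le1.4}), which produces $+\tfrac{1}{4\varepsilon}\|u_\delta\|_2^4$; by H\"older on $(-1,1)$ this is exactly the size of the good local quartic $\tfrac{1}{2\varepsilon}\|u_\delta\|_4^4$, and once the remaining $\varphi_\delta$-nonlocal term (of order $\|u_\delta\|_2^2\|\varphi_\delta\|_2\lesssim \|u_\delta\|_2^2(1+\|u_\delta\|_2^2)$) is added the inequality no longer closes. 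The paper avoids all of this by a cancellation you do not mention: multiply the first equation in \eqref{bc4} by $2u_\delta$, the \emph{second} by $\varphi_\delta$, and add. The dangerous cubic contributions $\pm 2\int u_\delta\,\varphi_\delta\,\partial_x u_\delta$ cancel exactly (this is precisely why the right-hand side of the elliptic equation is $\partial_xE(u_\delta)$), leaving
\[
\frac{d}{dt}\|u_\delta\|_2^2+\varepsilon\|\partial_x\varphi_\delta\|_2^2+\|\varphi_\delta\|_2^2+2\delta\|\partial_xu_\delta\|_2^2
=2r\!\int u_\delta^2E(u_\delta)\,dx+(a{+}1)\!\int\partial_xu_\delta\,\varphi_\delta\,dx,
\]
whose right-hand side is controlled for every $r\ge0$ by $C(1+\|u_\delta\|_2^2)$. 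No quartic balancing enters. The monostable analogue uses the pair $(\log u_\delta+1,\varphi_\delta)$ and the same cancellation, giving $L\log L$ instead of $L^2$.

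Two smaller discrepancies: the paper gets the $L^\infty$ bound on $u_\delta$ from the $W^{1,1}$ bound via embedding (Lemma~\ref{le3.4}), not by a maximum principle, and it stops at $u_\delta\in L^\infty(0,T;W^{1,2})$ with $\partial_tu_\delta\in L^\infty(0,T;(W^{1,2})')$ (Lemmas~\ref{le10.4}--\ref{le4.4}); the $L^\infty(W^{2,2})$ and $\partial_tu_\delta\in L^2$ statements you aim for are stronger than what is proved or needed for the Aubin--Lions step. Your maximum-principle route for $\|u_\delta\|_\infty$ can be made to work once the $L^2$ bound is in place (the lower bound on $\partial_x\varphi_\delta$ is only $L^2$ in time through $\|\varphi_\delta\|_\infty$, but that is enough for Gronwall), yet as written it again leans on $r>0$ for a strict sign.
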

As a consequence of \eqref{we4.4} no boundary conditions for $u$ are needed.\\
The proof of the previous theorem is performed by deriving estimates which are uniformly valid for $0<\delta<1$. This proof starts with the suitable cancellation of the coupling terms in two equations which gives an estimate for $u_\delta$ in $L^\infty (L^2)$ and for $\varphi_\delta$ in $L^2(W^{1,2})$. Then we derive a lower bound for $\partial_x\varphi_\delta$ and an $L^\infty(W^{1,1})$ bound on $u_\delta$ which leads to an $L^\infty(W^{1,2})$ bound on $u_\delta$. We will, by a compactness argument, show the convergence of $u_\delta$ to the smooth solution of the transport system \eqref{we1.4}, \eqref{we3.4} and \eqref{we4.4}.\\

Next, we turn to the monostable case, that is when $E(u)=1-u$, and we study the limit $\delta\rightarrow 0$. 
\begin{theorem}\label{th2.4}
Assume that $u_0$ is a nonnegative function in $W^{1,2}(-1,1)$ and $E(u)=(1-u)$. For $\delta>0$ let $u_\delta$ be the global nonnegative solution of \eqref{in5.4} given by Theorem \ref{th3.4} below. Then, for all $T>0$ 
\begin{equation*}
\lim\limits_{\delta \to 0}||u_{\delta}(t)- u(t)||_C=0\ \ \mathrm{for\ all}\ t\in(0,T),
\end{equation*}
where $u\in C\left([0,T]; L^2(-1,1)\right)\cap L^\infty\left((0,T); W^{1,2}(-1,1)\right)$ is the unique smooth solution of the following transport system,
\begin{equation}\label{we5.4} \partial_t u=-\partial_x \left(u\ \varphi\right)+r\ u\ (1-u),\ \ x\in (-1,1), t>0,\end{equation}
\begin{equation}\label{we6.4}-\varepsilon\  \partial^2_x \varphi+\varphi=- \partial_xu,\ \ x\in (-1,1), t>0,\end{equation}
with boundary and initial conditions
\begin{equation}\label{we7.4}
\varphi(t,\pm1)=0,\ \mathrm{for\ all}\ t>0,\ \mathrm{and}\ u(0,x)=u_0(x),\ x\in(-1,1).
\end{equation}
\end{theorem}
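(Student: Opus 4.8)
The overall scheme will follow the bistable case of Theorem~\ref{th1.4}: establish a priori bounds on the global solution $(u_\delta,\varphi_\delta)$ of \eqref{in5.4} that are uniform in $\delta\in(0,1)$, use them to extract accumulation points as $\delta\to0$, pass to the limit to identify every accumulation point as a smooth solution of \eqref{we5.4}--\eqref{we7.4}, and finally appeal to the uniqueness statement of Proposition~\ref{pr1.4} to deduce that the entire family converges. As announced in the introduction, the only step that genuinely has to be redone is the first energy estimate, because here $E(u)=1-u$ gives $\partial_x E(u_\delta)=-\partial_x u_\delta$, so that \eqref{we6.4} (and its parabolic analogue in \eqref{in5.4}) is linear in $u_\delta$; all subsequent estimates of Section~5 transfer with only the polynomial $E$ replaced by the affine one, affecting constants but not the structure.

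For the first estimate I would perform the suitable cancellation of the coupling terms: test the evolution equation for $u_\delta$ by a well-chosen quantity and the elliptic equation by a matching quantity, and add the two identities so that the drift contribution $\int u_\delta\,\varphi_\delta\,\partial_x u_\delta$ produced by the convection term is compensated by the term coming from $-\partial_x u_\delta$ on the right-hand side of the (now linear) elliptic equation. The affine form of $E$ is what makes this compensation transparent. Together with the no-flux and Dirichlet boundary conditions $\partial_x u_\delta(t,\pm1)=\varphi_\delta(t,\pm1)=0$, the nonnegativity $u_\delta\ge0$, and the favourable sign of the cubic part $-r\int u_\delta^3$ of the logistic reaction $r\,u_\delta(1-u_\delta)$, this should close a bound on $u_\delta$ in $L^\infty(0,T;L^2)$; testing \eqref{we6.4} by $\varphi_\delta$ then controls $\varphi_\delta$ in $L^2(0,T;W^{1,2})$ (indeed in $L^\infty(0,T;W^{1,2})$ here) in terms of $u_\delta$, all uniformly in $\delta$.

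I would then reproduce, mutatis mutandis, the chain of estimates from Section~5: a uniform lower bound for $\partial_x\varphi_\delta$, a uniform $L^\infty(0,T;W^{1,1})$ bound on $u_\delta$, and, by bootstrapping, a uniform $L^\infty(0,T;W^{1,2})$ bound on $u_\delta$. Reading $\partial_t u_\delta$ off \eqref{in5.4} yields a uniform bound in a space of negative order, so an Aubin--Lions argument together with the compact embedding $W^{1,2}(-1,1)\hookrightarrow C([-1,1])$ produces a subsequence along which $u_\delta\to u$ and $\varphi_\delta\to\varphi$ in suitable topologies, with $u$ in the stated class $C([0,T];L^2)\cap L^\infty((0,T);W^{1,2})$. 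In the weak formulation the diffusion term is $\delta\int\partial_x u_\delta\,\partial_x\psi$, which vanishes as $\delta\to0$ since $\sqrt\delta\,\partial_x u_\delta$ stays bounded in $L^2$; the convection term passes to the limit thanks to the strong convergence of $u_\delta$ and the convergence of $\varphi_\delta$, the reaction term by continuity, and the elliptic equation by its linearity and weak convergence. Hence $(u,\varphi)$ solves \eqref{we5.4}--\eqref{we7.4}, Proposition~\ref{pr1.4} upgrades subsequential convergence to convergence of the full family, and the uniform-in-$\delta$ $W^{1,2}$ bound in $x$ turns this into the uniform ($C$) convergence asserted for each $t\in(0,T)$.

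The main obstacle is, as in the bistable case, the derivation of the $\delta$-uniform estimates past the $L^2$ level---above all the lower bound on $\partial_x\varphi_\delta$ and the passage from $W^{1,1}$ to $W^{1,2}$ control of $u_\delta$. The difficulty is that the convection term $\partial_x(u_\delta\,\varphi_\delta)$ carries no small parameter as $\delta\to0$ and could in principle steepen the gradient of $u_\delta$; since the diffusion $\delta\,\partial_x^2 u_\delta$ degenerates in the limit it cannot be used to absorb it, so all the required control must be squeezed out of the coupling with the elliptic equation, the sign of the logistic reaction, and the nonnegativity of $u_\delta$. Checking that these ingredients close the estimate hierarchy uniformly in $\delta$ is the crux of the argument.
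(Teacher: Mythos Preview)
Your overall strategy is right, but the first estimate is precisely where the monostable case diverges from the bistable one, and your sketch gets this wrong. You claim that testing the evolution equation and the elliptic equation by ``matching quantities'' gives a cancellation of the drift contribution and closes an $L^\infty(0,T;L^2)$ bound on $u_\delta$; the reference to the cubic reaction term $-r\int u_\delta^3$ makes clear that you intend to test the $u_\delta$-equation by $u_\delta$ itself. But that does \emph{not} produce the needed cancellation here. Testing the $u_\delta$-equation by $2u_\delta$ yields the cubic drift term $2\int u_\delta\,\varphi_\delta\,\partial_x u_\delta$, while testing the elliptic equation $-\varepsilon\,\partial_x^2\varphi_\delta+\varphi_\delta=-\partial_x u_\delta$ by $\varphi_\delta$ only gives $-\int \varphi_\delta\,\partial_x u_\delta$, which is quadratic in $(u_\delta,\varphi_\delta)$ and cannot absorb the cubic term. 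The $L^2$-level cancellation in the bistable case worked precisely because $E'(u)=-2u+(a+1)$ contains the $-2u$ factor, so that $\int E'(u_\delta)\,\partial_x u_\delta\,\varphi_\delta$ already produced $-2\int u_\delta\,\varphi_\delta\,\partial_x u_\delta$. With the affine $E$ that mechanism disappears; the fact that the elliptic equation is now linear in $u_\delta$ does not help.

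The paper notes explicitly that an $L^\infty(L^2)$ estimate does not seem to be available at this stage. Instead it tests the $u_\delta$-equation by $\log u_\delta+1$: then the convection term becomes $\int \varphi_\delta\,\partial_x u_\delta$, which cancels exactly against the $-\int \partial_x u_\delta\,\varphi_\delta$ coming from the elliptic equation tested by $\varphi_\delta$. This yields only an $L^\infty(0,T;L\log L)$ bound on $u_\delta$ together with $\varphi_\delta\in L^2(0,T;W^{1,2})$ and $\sqrt{\delta}\,\partial_x\sqrt{u_\delta}\in L^2$. From there the remaining hierarchy (lower bound on $\partial_x\varphi_\delta$, $W^{1,1}$ then $W^{1,2}$ bounds on $u_\delta$, $(W^{1,2})'$ bound on $\partial_t u_\delta$, Aubin--Lions, passage to the limit, uniqueness) proceeds as you outline. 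So your plan is salvageable, but the key entry point has to be changed from $L^2$ to $L\log L$.
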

The proof of Theorem \ref{th2.4} follows the same lines as that of Theorem \ref{th1.4}. As in the bistable case, there is a cancellation between the two equations but it only gives an $L^\infty(L \log L)$ bound on $u_\delta$ and an $L^2(W^{1,2})$ bound on $\varphi_\delta$.
\section{Well-Posedness of \eqref{in5.4}}
We first recall the notion of solution to \eqref{in5.4} to be used in this paper.
\begin{definition}\label{de1.4}
Let $T>0$, $E\in C^2(\mathbb{R})$, and an initial condition $u_0\in W^{1, 2}(-1,1)$ . For  $0<\delta<1$, a strong solution to \eqref{in5.4} on $[0, T)$ is a function
$$u_\delta \in C \left( [0,T), W^{1,2}(-1,1)\right)\cap C\left( (0,T), W^{2,2}(-1,1)\right),$$ such that
\begin{equation*}
\left\{
\begin{array}{llll}
\displaystyle\partial_t u_\delta&=& \delta\ \partial_x^2 u_\delta-\partial_x(u_\delta\ \varphi_\delta)+r\ u_\delta\ E(u_\delta),& \mathrm{a.e.\ in }\ [0,T)\times(-1,1)\\
\displaystyle u_\delta(0,x)&=&u_0(x),& \mathrm{a.e.\ in }\ (-1,1)\\
\displaystyle \partial_xu_\delta(t,\pm1)&=&0,& \mathrm{a.e.\ on }\ [0,T),
\end{array}
\right.
\end{equation*}
where, for all $t\in [0,T)$, $\varphi_\delta(t)$ is the unique solution in $W^{2,2}(-1,1)$ of
\begin{equation*}
\left\{
\begin{array}{llll}
\displaystyle-\varepsilon\partial_x^2\varphi_\delta(t)+\varphi_\delta(t)&=& \partial_x E(u_\delta(t))& \mathrm{a.e.\ in}\ (-1,1)\\
\displaystyle\varphi_\delta(t,\pm1)&=&0&
\end{array}
\right.
\end{equation*}
\end{definition}
We now recall the global existence theorem which is proved in \cite{well}, where $E(u)$ is given by \eqref{in7.4} or \eqref{in8.4}.
\begin{theorem} \label{th3.4}Assume that $u_0$ is a nonnegative function in $W^{1,2}(-1,1)$,\\ and $E(u)=(1-u)(u-a)$ for some $a\in (0,1)$ or $E(u)=1-u$. Then \eqref{in5.4} has a unique global nonnegative solution $u$ in the sense of Definition \ref{de1.4}.
\end{theorem}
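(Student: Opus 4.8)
The plan is to treat \eqref{in5.4}, for a fixed $\delta$, as a semilinear parabolic equation for $u_\delta$ in which $\varphi_\delta$ is slaved to $u_\delta$ through the elliptic solve. First I would define the solution map $\Phi\colon u\mapsto\varphi$ of the second equation in \eqref{in5.4}: for $u\in W^{1,2}(-1,1)$ one has $\partial_x E(u)=E'(u)\,\partial_x u\in L^2(-1,1)$ (using $E\in C^2$ and the embedding $W^{1,2}(-1,1)\hookrightarrow L^\infty(-1,1)$), so Lax--Milgram together with elliptic regularity produces a unique $\varphi\in W^{2,2}(-1,1)\cap W_0^{1,2}(-1,1)$, and $\Phi$ is locally Lipschitz from $W^{1,2}$ into $W^{2,2}$. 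Substituting $\varphi_\delta=\Phi[u_\delta]$ rewrites the first equation as $\partial_t u_\delta+A u_\delta=F(u_\delta)$, where $A=-\delta\,\partial_x^2$ with Neumann boundary conditions is self-adjoint and bounded below, hence sectorial and (after a harmless shift making it positive) the generator of an analytic semigroup on $L^2(-1,1)$, and $F(u)=-\partial_x(u\,\Phi[u])+r\,u\,E(u)$.

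Then I would establish local existence and uniqueness by the standard theory of abstract parabolic equations, working in the fractional power spaces $X^\alpha$ associated with $A$ with $\alpha$ chosen so that $X^\alpha\hookrightarrow W^{1,2}$. The point to check is that $F$ is locally Lipschitz from $W^{1,2}$ into $L^2$: the reaction term is harmless since $E\in C^2$ and $u$ is bounded, while for the drift term one expands $\partial_x(u\,\Phi[u])=\partial_x u\,\Phi[u]+u\,\partial_x\Phi[u]$ and uses the Lipschitz bound on $\Phi$ together with $W^{1,2}\hookrightarrow L^\infty$. This yields a unique maximal strong solution $u_\delta\in C([0,T_{\max}),W^{1,2})\cap C((0,T_{\max}),W^{2,2})$, in the sense of Definition \ref{de1.4}, with the blow-up alternative: either $T_{\max}=\infty$, or $\|u_\delta(t)\|_{W^{1,2}}\to\infty$ as $t\uparrow T_{\max}$.

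Nonnegativity I would obtain from the maximum principle. Writing the first equation in nondivergence form, $\partial_t u_\delta-\delta\,\partial_x^2 u_\delta+\varphi_\delta\,\partial_x u_\delta+\bigl(\partial_x\varphi_\delta-r\,E(u_\delta)\bigr)\,u_\delta=0$, the zeroth-order coefficient is bounded on $[0,T]\times(-1,1)$ for every $T<T_{\max}$, and the no-flux condition is compatible; since $u\equiv 0$ solves the equation and $u_0\geq 0$, the comparison principle gives $u_\delta\geq 0$ on $[0,T_{\max})$.

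Finally, to upgrade to a global solution I must rule out blow-up of $\|u_\delta(t)\|_{W^{1,2}}$ in finite time by a priori estimates, and I expect this to be the main obstacle. The strategy is to combine the dissipation supplied by $\delta>0$ with the damping built into the reaction: in both cases $u\,E(u)$ is strongly negative for large $u\geq 0$ ($\sim -u^3$ in the bistable case \eqref{in8.4}, $\sim -u^2$ in the monostable case \eqref{in7.4}), which should control the growth produced by the drift. Concretely I would test the first equation against $u_\delta$; the boundary contributions vanish because $\partial_x u_\delta(\pm1)=\varphi_\delta(\pm1)=0$, and the coupling reduces to a term of the form $\tfrac12\int u_\delta^2\,\partial_x\varphi_\delta$, which I would estimate through the elliptic bound $\|\varphi_\delta\|_{W^{1,2}}\lesssim\varepsilon^{-1}\|E(u_\delta)\|_{L^2}$ (itself obtained by testing the second equation against $\varphi_\delta$) and then absorb using the super-linear dissipation and the $\delta\|\partial_x u_\delta\|_{L^2}^2$ term. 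This gives an $L^\infty_{\mathrm{loc}}(L^2)$ bound (only $L^\infty_{\mathrm{loc}}(L\log L)$ in the monostable case, which still suffices), and testing the equation against $-\partial_x^2 u_\delta$ then bootstraps to the required $L^\infty_{\mathrm{loc}}(W^{1,2})$ bound; the blow-up alternative forces $T_{\max}=\infty$. Uniqueness follows from the same Lipschitz estimates and a Gronwall argument applied to the difference of two solutions.
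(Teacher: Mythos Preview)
The paper does not prove Theorem~\ref{th3.4}; it is quoted from \cite{well}, so there is no ``paper's own proof'' to compare against. Your local theory (elliptic solve $\Phi$, local Lipschitz continuity of $F:W^{1,2}\to L^2$, analytic semigroup, blow-up alternative) and the nonnegativity argument are fine and standard.

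The genuine gap is in your route to the global $L^\infty_{\mathrm{loc}}(L^2)$ bound. You propose to test the $u$-equation by $u_\delta$, obtain the coupling term $\tfrac12\int u_\delta^2\,\partial_x\varphi_\delta$, and control it \emph{via} the separate elliptic estimate $\|\partial_x\varphi_\delta\|_2\lesssim\varepsilon^{-1}\|E(u_\delta)\|_2$. In the bistable case $|E(u)|\lesssim 1+u^2$, so this yields
\[
\Bigl|\int u_\delta^2\,\partial_x\varphi_\delta\Bigr|\le \|u_\delta\|_4^2\,\|\partial_x\varphi_\delta\|_2
\lesssim \varepsilon^{-1}\|u_\delta\|_4^2\bigl(1+\|u_\delta\|_4^2\bigr),
\]
and the quartic term $\varepsilon^{-1}\|u_\delta\|_4^4$ cannot be absorbed: the reaction contributes at best $-r\|u_\delta\|_4^4$, which loses when $r$ is small (and the theorem allows $r=0$), while using $\delta\|\partial_x u_\delta\|_2^2$ together with Gagliardo--Nirenberg only trades $\|u_\delta\|_4^4$ for $\|u_\delta\|_2^6$, producing a superlinear ODE for $\|u_\delta\|_2^2$ that does not give a global bound. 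The same obstruction arises in the monostable $L\log L$ computation if one treats the coupling $\int\varphi_\delta\,\partial_x u_\delta$ by a separate elliptic bound.

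What makes the estimate close (and what the paper exploits in Lemmas~\ref{le0.4} and~\ref{le5.4}, and what \cite{well} presumably uses) is a \emph{structural cancellation}: testing the $\varphi$-equation by $\varphi_\delta$ produces $\int E'(u_\delta)\,\partial_x u_\delta\,\varphi_\delta$, whose top-order part is $-2\int u_\delta\,\partial_x u_\delta\,\varphi_\delta$ in the bistable case and $-\int\partial_x u_\delta\,\varphi_\delta$ in the monostable case, and this exactly cancels the coupling term coming from the $u$-equation when the two identities are added. After cancellation only a linear Gronwall remains. Your sketch treats the two tests independently and so misses precisely the mechanism that prevents blow-up of $\|u_\delta\|_2$; you should add the two energy identities rather than chaining one into the other.
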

\section{Uniqueness}
In this section we prove the uniqueness of the solution of \eqref{in6.4}. Let us first give the definition of the strong solution of \eqref{in6.4}.
\begin{definition}
\label{de2.4}
Let $T>0$, $E\in C^2(\mathbb{R})$, and an initial condition $u_0\in W^{1,2}(-1,1)$ . A strong solution on $[0, T)$ to the transport system \eqref{in6.4} is a function
$$u \in C \left( [0,T), L^2(-1,1)\right)\cap C\left( (0,T), W^{1,2}(-1,1)\right),$$ such that
\begin{equation*}
\left\{
\begin{array}{llll}
\displaystyle\partial_t u&=&-\partial_x(u\ \varphi)+r\ u\ E(u),& \mathrm{a.e.\ in }\ [0,T)\times(-1,1)\\
\displaystyle u(0,x)&=&u_0(x),& \mathrm{a.e.\ in }\ (-1,1),
\end{array}
\right.
\end{equation*}
where, for all $t\in [0,T)$, $\varphi(t)$ is the unique solution in $W^{2,2}(-1,1)$ of
\begin{equation*}
\left\{
\begin{array}{llll}
\displaystyle-\varepsilon\partial_x^2 \varphi(t)+\varphi(t)&=& \partial_x E(u(t))& \mathrm{a.e.\ in}\ (-1,1)\\
\displaystyle \varphi(t,\pm1)&=&0&.
\end{array}
\right.
\end{equation*}
\end{definition}
 The main result is contained in
\begin{proposition}\label{pr1.4}
Assume that $u_0$ is a nonnegative function in $W^{1,2}(-1,1)$ and $E\in C^2(\mathbb{R}).$ Then for all $T>0$, there exists at most one solution $u$ of \eqref{in6.4} in the sense of Definition \ref{de2.4}, such that 
\begin{equation}
\label{eq53.4}
u\in L^\infty\left((0,T), W^{1,1}(-1,1)\right),\ \ \mathrm{and}\ \ \varphi\in L^\infty\left((0,T); W^{1,\infty}(-1,1)\right).
\end{equation}
\end{proposition}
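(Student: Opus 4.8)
The plan is to establish an $L^1$-contraction estimate for the difference of two solutions. Suppose $u_1,u_2$ are two solutions of \eqref{in6.4} on $[0,T)$ in the sense of Definition \ref{de2.4} satisfying \eqref{eq53.4} and sharing the initial datum $u_0$. Write $w=u_1-u_2$ and $\psi=\varphi_1-\varphi_2$, where $\varphi_i$ is the velocity associated with $u_i$. Since $u_1,u_2\in L^\infty((0,T),W^{1,1}(-1,1))$ and $W^{1,1}(-1,1)\hookrightarrow L^\infty(-1,1)$, both solutions take values in a fixed bounded interval, so $E$ and the reaction nonlinearity $F(u):=u\,E(u)$ are Lipschitz along the solutions. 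Subtracting the two equations gives, a.e. in $(0,T)\times(-1,1)$,
\[
\partial_t w=-\partial_x(w\,\varphi_1)-\partial_x(u_2\,\psi)+r\,\bigl(F(u_1)-F(u_2)\bigr),
\]
while $\psi$ solves $-\varepsilon\,\partial_x^2\psi+\psi=\partial_x\bigl(E(u_1)-E(u_2)\bigr)$ with $\psi(t,\pm1)=0$.

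First I would differentiate $t\mapsto\|w(t)\|_{L^1}$, which amounts (after a standard regularization of the sign function) to testing the difference equation against $\mathrm{sgn}(w)$. The transport term is harmless: because $\varphi_1(t,\pm1)=0$ one has $\mathrm{sgn}(w)\,\partial_x(w\,\varphi_1)=\partial_x(|w|\,\varphi_1)$ in the sense of distributions, so its integral over $(-1,1)$ vanishes. The reaction term contributes at most $r\,\mathrm{Lip}(F)\,\|w\|_{L^1}$ by the Lipschitz bound above. Hence
\[
\frac{d}{dt}\|w\|_{L^1}\le r\,\mathrm{Lip}(F)\,\|w\|_{L^1}+\bigl\|\partial_x(u_2\,\psi)\bigr\|_{L^1}.
\]

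The crux is to absorb the nonlocal coupling term $\|\partial_x(u_2\psi)\|_{L^1}\le \|\partial_x u_2\|_{L^1}\,\|\psi\|_{L^\infty}+\|u_2\|_{L^\infty}\,\|\partial_x\psi\|_{L^1}$ into $\|w\|_{L^1}$. For this I would use elliptic regularity for the operator $-\varepsilon\,\partial_x^2+1$ on $(-1,1)$ with homogeneous Dirichlet data. Writing $h:=E(u_1)-E(u_2)$, so that $\|h\|_{L^1}\le \mathrm{Lip}(E)\,\|w\|_{L^1}$, and using the Green's function $G_\varepsilon$ of $-\varepsilon\,\partial_x^2+1$, one has $\psi(x)=-\int_{-1}^1\partial_yG_\varepsilon(x,y)\,h(y)\,dy$ after integrating by parts (the boundary terms vanishing since $G_\varepsilon(x,\pm1)=0$). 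Because $\partial_yG_\varepsilon$ is bounded and $\int_{-1}^1|\partial_x\partial_yG_\varepsilon(x,y)|\,dx$ is finite uniformly in $y$ (for fixed $\varepsilon$, the only singularity of $\partial_x\partial_yG_\varepsilon$ being the integrable jump on the diagonal), this yields $\|\psi\|_{L^\infty}+\|\partial_x\psi\|_{L^1}\le C(\varepsilon)\,\|w\|_{L^1}$. Combined with the uniform bounds $\|u_2\|_{L^\infty}+\|\partial_x u_2\|_{L^1}\le M$ coming from \eqref{eq53.4}, the coupling term is bounded by $C(\varepsilon,M)\,\|w\|_{L^1}$.

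Putting these together gives $\tfrac{d}{dt}\|w\|_{L^1}\le K\,\|w\|_{L^1}$ for a constant $K$ depending on $\varepsilon$, $M$, $r$ and the Lipschitz constants, and Gronwall's lemma together with $w(0)=0$ forces $\|w(t)\|_{L^1}=0$ for all $t\in[0,T)$, i.e. $u_1=u_2$. The main obstacle is precisely the nonlocal term: one must quantify how $\psi$ depends on $w$ in $L^1$-based norms, which is why the $W^{1,1}$-regularity of $u$ in \eqref{eq53.4} (rather than an $L^2$ framework) is the natural setting; the secondary technical point is justifying the computation with $\mathrm{sgn}(w)$, which I would handle by approximating $\mathrm{sgn}$ with smooth monotone functions and passing to the limit.
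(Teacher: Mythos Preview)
Your argument is correct and follows the same overall strategy as the paper: an $L^1$-contraction estimate obtained by testing the difference equation against $\mathrm{sgn}(w)$, controlling the nonlocal term by showing $\|\psi\|_{L^\infty}+\|\partial_x\psi\|_{L^1}\le C\|w\|_{L^1}$, and closing with Gronwall.

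The one genuine difference is in how the elliptic estimate for $\psi$ is obtained. You invoke the Green's function $G_\varepsilon$ of $-\varepsilon\,\partial_x^2+1$ with Dirichlet data and read off the bounds from properties of its mixed derivative. The paper instead proceeds in two elementary steps: it integrates the second-order ODE for $\psi$ from $y$ to $x$ and then in $y$ over $(-1,1)$ to obtain an explicit formula for $\partial_x\psi$ in terms of $\|\psi\|_{L^1}$ and $E(u_1)-E(u_2)$; then, to close the loop, it introduces an auxiliary potential $\chi$ with $-\partial_x^2\chi=\psi$, $\chi(\pm1)=0$, multiplies the equation for $\psi$ by $\chi$, and uses an energy estimate together with the embedding $W^{1,2}\hookrightarrow L^\infty$ to get $\|\psi\|_{L^2}\le C\|w\|_{L^1}$. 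Your Green's function route is more direct and conceptually cleaner; the paper's approach avoids any appeal to kernel properties and is entirely self-contained with basic integrations by parts. One small imprecision in your write-up: the singularity of $\partial_x\partial_y G_\varepsilon$ on the diagonal is a Dirac mass (coming from the jump of $\partial_y G_\varepsilon$ across $x=y$), not merely an integrable jump; this does not affect your conclusion, since the Dirac contributes a term of size $\varepsilon^{-1}|h(x)|$ to $\partial_x\psi(x)$, which is indeed $L^1$-bounded by $\varepsilon^{-1}\|h\|_{L^1}$.
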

\begin{proof}
Let us assume that there exist two different solutions $u_1$ and $u_2$ to \eqref{in6.4} corresponding to the same initial conditions, and fix $T>0$. We put
$$(u, \varphi)=(u_1-u_2, \varphi_1-\varphi_2), \ \mathrm{in}\ [0,T]\times (-1,1).$$ Then $(u, \varphi)$ satisfies
\begin{equation}\label{eq51.4}
\left\{
\begin{array}{llll}
\displaystyle \partial_tu&=&-\partial_x(u\ \varphi_1)-\partial_x(u_2\ \varphi)+r\ u_1\ E(u_1)-r\ u_2\ E(u_2),&  \mathrm{in}\ (0,T)\times(-1,1)\\
\displaystyle -\varepsilon \partial_x^2 \varphi+\varphi&=&\ E'(u_1)\ \partial_xu_1-E'(u_2)\ \partial_xu_2,& \mathrm{in}\ (0,T)\times(-1,1)\\
\displaystyle \varphi(t,\pm1)&=&0& \mathrm{on}\ (0,T)\\
\displaystyle u(0,x)&=&0,& \mathrm{in}\ (-1,1).
\end{array}
\right.
\end{equation}
We multiply the first equation in \eqref{eq51.4} by sign$(u)$, and integrate it by parts over $(-1,1)$ to obtain
\begin{eqnarray}
\frac{\mathrm{d}}{\mathrm{d}t}||u||_1&=& -\int_{-1}^1\varphi_1\ \partial_x|u|\ dx-\int_{-1}^1\partial_x\varphi_1\ |u|\ dx\nonumber\\
&-&\int_{-1}^1 \mathrm{sign (u)}\ \partial_x\varphi\ u_2\ dx-\int_{-1}^1 \varphi\ \partial_xu_2\ \mathrm{sign (u)}\ dx\nonumber\\
&+&r\ \int_{-1}^1 (u_1\ E(u_1)-u_2\ E(u_2)))\ \mathrm{sign (u)}\ dx\nonumber\\
&\leq& ||\partial_x\varphi||_1\ ||u_2||_\infty+||\partial_xu_2||_1\ ||\varphi||_\infty+r\ ||u_1\ E(u_1)-u_2\ E(u_2)||_1,\label{eq55.4}
\end{eqnarray}
since the first line in the right-hand side vanishes. Using the fact that $u_1$ and $u_2$ are bounded by \eqref{eq53.4} and the embedding of $W^{1,1}(-1,1)$ in $L^\infty(-1,1)$ we estimate
\begin{equation}\label{eq54.4}
||u_1\ E(u_1)-u_2\ E(u_2)||_1\leq C\ ||u||_1.
\end{equation}
Using \eqref{eq53.4}, \eqref{eq54.4}, and the continuous embedding of $W^{1,1}(-1,1)$ in $L^\infty(-1,1)$ , \eqref{eq55.4} becomes
\begin{equation}\label{eq56.4}
\frac{\mathrm{d}}{\mathrm{d}t}||u||_1\leq C\ ||\partial_x\varphi||_1+C\ ||\varphi||_\infty+C\ ||u||_1.
\end{equation}
To complete the proof of Proposition \ref{pr1.4}, it remains to estimate $||\partial_x\varphi||_1$ and $||\varphi||_\infty$.\\

For $x,y \in (-1,1)$, we integrate the second equation in \eqref{eq51.4} to obtain
\begin{eqnarray}
-\varepsilon\ \int_y^x \partial^2_x \varphi(z)\ dz+\int_y^x \varphi(z)\ dz&=&\int_y^x \left( \partial_x E(u_1)-\partial_x E(u_2)\right)\ dz\nonumber\\
-\varepsilon\ \left(\partial_x \varphi(x)-\partial_x \varphi(y)\right)+\int_y^x \varphi(z)\ dz&=&[E(u_1(x))-E(u_2(x))-E(u_1(y))+E(u_2(y))].\nonumber
\end{eqnarray}
Next we integrate the above equality with respect to $y$ over $(-1,1)$ to obtain
\begin{equation*}
\partial_x \varphi(x)=\frac{1}{2\ \varepsilon}\int_{-1}^1\int_y^x \varphi(z)\ dzdy-\frac{1}{\varepsilon}\  E(u_1(x))+\frac{1}{\varepsilon}E(u_2(x))+\frac{1}{2\ \varepsilon}\int_{-1}^1 (E(u_1(y))-E(u_2(y)))\ dy.
\end{equation*}
This gives
\begin{equation*}
|\partial_x \varphi(x)|\leq\frac{||\varphi||_1}{\varepsilon}+\frac{1}{\varepsilon}\  |E(u_1(x))-E(u_2(x))|+\frac{1}{2\ \varepsilon}\ ||E(u_1)-E(u_2)||_1.
\end{equation*}
Therefore
\begin{equation*}
||\partial_x \varphi||_1\leq 2\ \frac{||\varphi||_1}{\varepsilon}+\frac{1}{\varepsilon}\ ||E(u_1)-E(u_2)||_1+\frac{1}{\varepsilon}\ ||E(u_1)-E(u_2)||_1.
\end{equation*}
Since $u_1$ and $u_2$ are bounded and $E\in C^2(\mathbb{R})$ we obtain
\begin{equation}\label{eq58.7}
||\partial_x \varphi||_1\leq 2\ \frac{||\varphi||_1}{\varepsilon}+C\ \frac{2}{\varepsilon}\ ||u||_1.
\end{equation}
It remains to prove an $L^1$ estimate to $\varphi$. For that purpose, we define,
for $i=1,2$, the function $\psi_i\in L^\infty((0,T), W^{2,\infty}(-1,1))$ solution of
\begin{equation}
\label{eq52.4}
\left\{
\begin{array}{lll}
\displaystyle -\partial^2_x \psi_i(t,x)&=&\varphi_i(t,x),\  \mathrm{in}\ (-1,1)\\
\displaystyle \psi_i(t,\pm1)&=&0.
\end{array}
\right.
\end{equation}
We multiply the second equation in \eqref{eq51.4} by $\psi=\psi_1-\psi_2$ and integrate it over $(-1,1)$ to obtain
\begin{eqnarray}
||\varphi||_2^2+||\partial_x \psi||_2^2&=&\int_{-1}^1 \partial_x \left( E(u_1)-E(u_2)\right)\ \psi\ dx\nonumber\\
&\leq& ||E(u_1)-E(u_2)||_1\ ||\partial_x \psi||_\infty\leq C\ ||u||_1\ ||\partial_x\psi||_\infty.\nonumber
\end{eqnarray}
By the continuous embedding of $W^{1,2}(-1,1)$ in $L^\infty(-1,1)$ and by \eqref{eq52.4}, the previous inequality reads
\begin{equation*}
||\partial_x\psi||_{W^{1,2}}\leq C\ ||u||_1
\end{equation*}
and
\begin{equation}\label{eq57.4}
||\varphi||_1\leq C\ ||\varphi||_2=C\ ||\partial^2_x\psi||_{2}\leq C(||\partial_x\psi||_{2}+||\partial^2_x\psi||_{2})\leq C\ ||u||_1.
\end{equation}
Substituting \eqref{eq57.4} into \eqref{eq58.7}, and by the continuous embedding of $W^{1,1}(-1,1)$ in $L^\infty(-1,1)$ we obtain
\begin{equation}\label{eq59.7}
||\varphi||_\infty\leq C\ ||\partial_x \varphi||_1\leq C\ ||u||_1.
\end{equation}
Finally, we substitute \eqref{eq59.7} into \eqref{eq56.4} we obtain
\begin{equation}\label{eq60.4}
\frac{\mathrm{d}}{\mathrm{d}t}||u||_1\leq C\ ||u||_1+r\ C \ ||u||_1.
\end{equation}
Gronwall's inequality applied to inequality \eqref{eq60.4} implies that the two solutions are identical, which proves Proposition \ref{pr1.4}.
\end{proof}
\section{The bistable case: $E(u)=(1-u)(u-a)$}
Let $T>0$, the system \eqref{in5.4} now reads 
\begin{equation}
\label{bc4}
\left\{
\begin{array}{llll}
\displaystyle \partial_t u_\delta&=& \delta\ \partial^2_{x} u_\delta-\partial_x(u_\delta\ \varphi_\delta)+r\ u_\delta\ (u_\delta-a)(1-u_\delta),&\  \mathrm{in}\ (0,T)\times(-1,1) \\
\displaystyle -\varepsilon\ \partial^2_{x} \varphi_\delta+\varphi_\delta&=&(-2u_\delta+(a+1))\ \partial_x u_\delta,&\  \mathrm{in}\ (0,T)\times(-1,1)  \\
\displaystyle \partial_x u_\delta(t,\pm 1)&=&\varphi_\delta(t,\pm1)=0,&\  \mathrm{on}\ (0,T),\\
\displaystyle u_\delta(0,x)&=&u_0(x),&\  \mathrm{in}\ (-1,1) ,
\end{array}
\right.
\end{equation}
for some $a\in (0,1)$.\\
Thanks to Theorem \ref{th3.4}, \eqref{bc4} has a unique global nonnegative solution in the sense of the Definition \ref{de1.4}.\\
Integrating \eqref{bc4} over $(0,T)\times (-1,1)$ and using the nonnegativity of $u_\delta$, we first observe that
\begin{equation}\label{eq6.4}
||u_\delta(t)||_1\leq ||u_0||_1+2\ r\ (1-a)\ T,\ \ \ \mathrm{for\ all}\ t\in [0,T].
\end{equation}
\subsection{Estimates}

\begin{lemma}\label{le0.4} There is $C_1(T)>0$ independent of $\delta$ such that
\begin{equation}\label{u4}
\int_0^T \left( \frac{\varepsilon}{2}\ ||\partial_x\varphi_\delta||_2^2+||\varphi_\delta||_2^2+2\ \delta\ ||\partial_x u_\delta||_2^2\right)\ dt\leq C_1(T),\ \ \mathrm{for\ all}\ t\in [0,T]
\end{equation}
\begin{equation}\label{dxu4}
||u_\delta(t)||_2\leq C_1(T),\ \ \mathrm{for\ all}\ t\in [0,T],
\end{equation}
and
\begin{equation}\label{phi4}
\int_0^T ||\varphi_\delta||^2_{\infty}\ dt\leq C_1(T)\ \ \mathrm{for\ all}\ t\in [0,T].
\end{equation}
\end{lemma}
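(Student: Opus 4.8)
The plan is to derive a single differential inequality for $\|u_\delta(t)\|_2^2$ by testing the two equations of \eqref{bc4} against carefully chosen multipliers and exploiting a cancellation between the resulting coupling terms. First I would multiply the first equation in \eqref{bc4} by $2u_\delta$ and integrate over $(-1,1)$; using the no-flux conditions $\partial_x u_\delta(t,\pm1)=0$ and $\varphi_\delta(t,\pm1)=0$, integrations by parts turn the diffusion term into $-2\delta\|\partial_x u_\delta\|_2^2$ and the transport term into $2\int_{-1}^1 u_\delta\,\varphi_\delta\,\partial_x u_\delta\,dx$, giving
\begin{equation*}
\frac{\mathrm{d}}{\mathrm{d}t}\|u_\delta\|_2^2+2\delta\|\partial_x u_\delta\|_2^2=2\int_{-1}^1 u_\delta\,\varphi_\delta\,\partial_x u_\delta\,dx+2r\int_{-1}^1 u_\delta^2(u_\delta-a)(1-u_\delta)\,dx.
\end{equation*}
In parallel I would multiply the elliptic equation in \eqref{bc4} by $\varphi_\delta$ and integrate; since the coefficient is $E'(u_\delta)=(a+1)-2u_\delta$, this yields
\begin{equation*}
\varepsilon\|\partial_x\varphi_\delta\|_2^2+\|\varphi_\delta\|_2^2=(a+1)\int_{-1}^1\partial_x u_\delta\,\varphi_\delta\,dx-2\int_{-1}^1 u_\delta\,\partial_x u_\delta\,\varphi_\delta\,dx.
\end{equation*}

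The main point is that adding these two identities makes the cubic cross terms $\pm2\int_{-1}^1 u_\delta\,\varphi_\delta\,\partial_x u_\delta\,dx$ cancel exactly; this is the ``suitable cancellation of the coupling terms'' that the structure of the system provides, and spotting it is the crux of the argument. I am then left with the single remainder $(a+1)\int_{-1}^1\partial_x u_\delta\,\varphi_\delta\,dx$, which I would integrate by parts (the boundary contribution vanishing because $\varphi_\delta(t,\pm1)=0$) into $-(a+1)\int_{-1}^1 u_\delta\,\partial_x\varphi_\delta\,dx$ and control by Young's inequality, absorbing half of $\varepsilon\|\partial_x\varphi_\delta\|_2^2$ into the left-hand side and keeping a harmless $\frac{(a+1)^2}{2\varepsilon}\|u_\delta\|_2^2$ on the right.

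The remaining term to handle is the reaction contribution $2r\int_{-1}^1 u_\delta^2(u_\delta-a)(1-u_\delta)\,dx$, and here I would use the bistable structure: for $s\ge0$ the quartic $s^2(s-a)(1-s)=-s^4+(1+a)s^3-as^2$ has negative leading coefficient and is therefore bounded above by some constant $M$ on $[0,\infty)$. Since $u_\delta\ge0$ by Theorem \ref{th3.4} and $(-1,1)$ has measure $2$, this term is at most $4rM$, independently of $\delta$. Collecting everything gives the key inequality
\begin{equation*}
\frac{\mathrm{d}}{\mathrm{d}t}\|u_\delta\|_2^2+2\delta\|\partial_x u_\delta\|_2^2+\frac{\varepsilon}{2}\|\partial_x\varphi_\delta\|_2^2+\|\varphi_\delta\|_2^2\le\frac{(a+1)^2}{2\varepsilon}\|u_\delta\|_2^2+4rM.
\end{equation*}

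From this the three claims follow quickly. Dropping the nonnegative dissipation terms and applying Gronwall's inequality yields the $\delta$-uniform bound $\|u_\delta(t)\|_2\le C_1(T)$ on $[0,T]$, which is \eqref{dxu4}; integrating the full inequality over $(0,T)$ and inserting this $L^2$ bound on the right-hand side gives \eqref{u4}. Finally, for \eqref{phi4} I would use the one-dimensional estimate $\|\varphi_\delta(t)\|_\infty^2\le 2\|\varphi_\delta(t)\|_2\,\|\partial_x\varphi_\delta(t)\|_2\le\|\varphi_\delta(t)\|_2^2+\|\partial_x\varphi_\delta(t)\|_2^2$, valid since $\varphi_\delta(t,\pm1)=0$, and integrate in time, so that \eqref{phi4} is an immediate consequence of \eqref{u4}. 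I expect the genuinely delicate step to be the choice of the multipliers $2u_\delta$ and $\varphi_\delta$ so that the coupling terms cancel; once this cancellation is seen, the rest is routine Gronwall and Sobolev bookkeeping, the only essential structural input being the sign of the quartic coming from the bistable reaction.
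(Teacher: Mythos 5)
Your proposal is correct and follows essentially the same route as the paper's proof: the same multipliers $2u_\delta$ and $\varphi_\delta$, the same exact cancellation of the cubic coupling terms, the same integration by parts plus Young absorption of $(a+1)\int \partial_x u_\delta\,\varphi_\delta\,dx$, and a time integration/Gronwall step to conclude. The only cosmetic differences are your bound on the reaction term (boundedness of the quartic on $[0,\infty)$, where the paper instead notes $u_\delta^2 E(u_\delta)\le 0$ outside $(a,1)$ to get the constant $4r(1-a)$) and your use of the explicit interpolation inequality $\|\varphi_\delta\|_\infty^2\le \|\varphi_\delta\|_2^2+\|\partial_x\varphi_\delta\|_2^2$ in place of the paper's appeal to the embedding $W^{1,2}(-1,1)\hookrightarrow L^\infty(-1,1)$ for \eqref{phi4}.
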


\begin{proof}
Multiplying the first equation in \eqref{bc4} by $2\ u_\delta$ and integrating it over $(-1,1)$, we obtain 
\begin{equation}\label{cub4}
\frac{\mathrm{d}}{\mathrm{d}t}\int_{-1}^1|u_\delta|^2\ dx=-2\ \delta\int_{-1}^1|\partial_x u_\delta|^2\ dx+2\ \int_{-1}^1\ u_\delta\ \varphi_\delta\ \partial_x u_\delta\ dx+2\ r\ \int_{-1}^1 u_\delta^2\ E(u_\delta)\ dx.
\end{equation}
Multiplying now the second equation in \eqref{bc4} by $\varphi_\delta$ and integrating it over $(-1,1)$ we obtain 
\begin{equation}\label{fi4}
\varepsilon\ \int_{-1}^1|\partial_x\  \varphi_\delta|^2\ dx+\int_{-1}^1|\varphi_\delta|^2\ dx=-2\ \int_{-1}^1 u_\delta\ \varphi_\delta\ \partial_x u_\delta\ dx+(a+1)\int_{-1}^1\partial_x u_\delta\ \varphi_\delta\ dx.
\end{equation}
At this point we notice that the cubic terms on the right hand side of \eqref{cub4} and \eqref{fi4} cancel one with the other, and summing \eqref{fi4} and \eqref{cub4} we obtain 
\begin{equation}
\frac{\mathrm{d}}{\mathrm{d}t}||u_\delta||^2_2+\varepsilon\ ||\partial_x\varphi_\delta||_2^2+||\varphi_\delta||^2_2+2\ \delta\ ||\partial_x u_\delta||_2^2=2\ r \int_{-1}^1 u_\delta^2\ E(u_\delta)\ dx+ (a+1)\int_{-1}^1 \partial_x u_\delta\ \varphi_\delta\ dx.
\end{equation}
We integrate by parts and use Cauchy-Schwarz inequality to obtain
\begin{equation*}
(a+1)\ \int_{-1}^1 \partial_x u_\delta\ \varphi_\delta\ dx=-(a+1)\int_{-1}^1 u_\delta\ \partial_x \varphi_\delta\ dx\leq\frac{(a+1)^2}{2\ \varepsilon}\ ||u_\delta||_2^2+\frac{\varepsilon}{2}\ ||\partial_x \varphi_\delta||_2^2.
\end{equation*}
On the other hand, $u_\delta^2\ E(u_\delta)\leq 0$ if $u_\delta\notin (a,1)$ so that
$$\int_{-1}^1 u_\delta^2\ E(u_\delta)\ dx\leq 2\ (1-a)$$
The previous inequalities give that
\begin{equation*}\label{df}\frac{\mathrm{d}}{\mathrm{d}t}||u_\delta||^2_2+\frac{\varepsilon}{2}\ ||\partial_x\varphi_\delta||_2^2+||\varphi_\delta||^2_2+2\ \delta\ ||\partial_x u_\delta||_2^2\leq\frac{(a+1)^2}{2\ \varepsilon}\ ||u_\delta||_2^2+ 4\ r\ (1-a).\end{equation*}
Therefore, by a time integration, there exists $C_1(T)$ such that \eqref{u4} and \eqref{dxu4} hold. By the continuous embedding of $W^{1,2}(-1,1)$ in $L^\infty(-1,1)$ we obtain \eqref{phi4}.
\end{proof}
\begin{lemma}\label{le20.4}
There is $C_2(T)>0$ independent of $\delta$ such that
\begin{equation}
\label{eq70.4}
||\varphi_\delta(t)||_2\leq C_2(T),\ \mathrm{for\ all }\ t\in[0,T].
\end{equation}
\end{lemma}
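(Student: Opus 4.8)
The plan is to reproduce, for a single solution, the duality argument already used in the proof of Proposition \ref{pr1.4} (see \eqref{eq52.4}--\eqref{eq57.4}). The naive approach—multiplying the elliptic equation in \eqref{bc4} by $\varphi_\delta$ and integrating by parts—only gives $\frac{\varepsilon}{2}\|\partial_x\varphi_\delta\|_2^2+\|\varphi_\delta\|_2^2\le\frac{1}{2\varepsilon}\|E(u_\delta)\|_2^2$, after noting that the right-hand side of that equation is exactly $\partial_x E(u_\delta)$ with $E(u)=(1-u)(u-a)$, since $E'(u)=-2u+(a+1)$. This is not good enough: because $E$ is quadratic, $\|E(u_\delta)\|_2$ involves $\|u_\delta\|_4^2$, whereas at this stage we control $u_\delta$ only in $L^\infty(L^2)$ (by \eqref{dxu4}) and in $L^\infty(L^1)$ (by \eqref{eq6.4}), not in $L^4$. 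The idea is therefore to test against a potential, so that only the $L^1$ norm of $E(u_\delta)$ is needed.

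Concretely, for each fixed $t\in[0,T]$ I would introduce $\psi_\delta(t)\in W^{2,2}(-1,1)\cap W^{1,2}_0(-1,1)$ solving $-\partial_x^2\psi_\delta=\varphi_\delta$ with $\psi_\delta(t,\pm1)=0$, exactly as in \eqref{eq52.4}. Multiplying the elliptic equation $-\varepsilon\partial_x^2\varphi_\delta+\varphi_\delta=\partial_x E(u_\delta)$ by $\psi_\delta$ and integrating by parts twice—all boundary terms vanishing because $\varphi_\delta(\pm1)=\psi_\delta(\pm1)=0$, and using $\partial_x^2\psi_\delta=-\varphi_\delta$—yields the clean identity $\varepsilon\|\varphi_\delta\|_2^2+\|\partial_x\psi_\delta\|_2^2=-\int_{-1}^1 E(u_\delta)\,\partial_x\psi_\delta\,dx$.

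I would then bound the right-hand side by $\|E(u_\delta)\|_1\,\|\partial_x\psi_\delta\|_\infty$. The key gain is that $\|E(u_\delta)\|_1\le\|u_\delta\|_2^2+(1+a)\|u_\delta\|_1+2a$, which is bounded uniformly in $\delta$ on $[0,T]$ thanks to \eqref{dxu4} and \eqref{eq6.4}. For the other factor I would use the one-dimensional embedding $W^{1,2}(-1,1)\hookrightarrow L^\infty(-1,1)$ applied to $\partial_x\psi_\delta$, together with $\partial_x^2\psi_\delta=-\varphi_\delta$, to get $\|\partial_x\psi_\delta\|_\infty\le C(\|\partial_x\psi_\delta\|_2+\|\varphi_\delta\|_2)$.

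Inserting this and applying Young's inequality (legitimately, since $\varepsilon$ is fixed) lets me absorb $\tfrac{\varepsilon}{2}\|\varphi_\delta\|_2^2$ and $\tfrac12\|\partial_x\psi_\delta\|_2^2$ into the left-hand side, leaving $\|\varphi_\delta(t)\|_2^2\le C(\varepsilon)\,\|E(u_\delta(t))\|_1^2\le C_2(T)^2$, uniformly in $t\in[0,T]$ and in $\delta$. The only genuinely delicate point is recognizing that the quadratic nonlinearity forbids a direct $L^2$ estimate on $\varphi_\delta$ and that the duality device replaces $\|E(u_\delta)\|_2$ by the controllable $\|E(u_\delta)\|_1$; the remaining steps are routine integration by parts and Sobolev embedding in one space dimension.
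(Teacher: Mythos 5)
Your proposal is correct and follows essentially the same route as the paper: introduce the potential $\psi_\delta$ solving $-\partial_x^2\psi_\delta=\varphi_\delta$ with $\psi_\delta(t,\pm1)=0$, test the elliptic equation against it to obtain $\varepsilon\|\varphi_\delta\|_2^2+\|\partial_x\psi_\delta\|_2^2\leq \|E(u_\delta)\|_1\,\|\partial_x\psi_\delta\|_\infty$, and close via the embedding $W^{1,2}(-1,1)\hookrightarrow L^\infty(-1,1)$ together with the bound $\|E(u_\delta)\|_1\leq C(1+\|u_\delta\|_2^2)$ from \eqref{dxu4}. The only cosmetic difference is that you absorb the right-hand side by Young's inequality, whereas the paper divides through by $\|\partial_x\psi_\delta\|_{W^{1,2}}$; both are valid.
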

\begin{proof}
We define the function $\psi\in L^2\left(0,T; W^{2,2}(-1,1)\right)$ solution of
\begin{equation}
\label{eq72.4}
\left\{
\begin{array}{llll}
\displaystyle -\partial^2_x\psi_\delta&=&\varphi_\delta&\ \mathrm{in}\ (-1,1)\\
\displaystyle \psi_\delta(t,\pm1)&=&0,& \mathrm{in}\ [0,T).
\end{array}
\right.
\end{equation}
Multiplying the second equation in \eqref{bc4} by $\psi_\delta$ and integrating it over $(-1,1)$ we obtain
\begin{eqnarray}
-\varepsilon\ \int_{-1}^1 \varphi_\delta\ \partial_x^2\psi_\delta\ dx-\int_{-1}^1 \partial^2_x\psi_\delta\ \psi_\delta\ dx&=&- \int_{-1}^1  E(u_\delta)\ \partial_x\psi_\delta\ dx\nonumber\\
\varepsilon\ ||\partial_x^2\psi_\delta||_2^2+||\partial_x\psi_\delta||_2^2&\leq& ||E(u_\delta)||_1\ ||\partial_x \psi_\delta||_\infty.\nonumber
\end{eqnarray}
Using the embedding of $W^{1,2}(-1,1)$ in $L^\infty(-1,1)$ and the specific form \eqref{in8.4} of $E$ we obtain
\begin{equation}
||\partial_x\psi_\delta||_{W^{1,2}}\leq C\ ||E(u_\delta)||_1\leq C\ (1+||u_\delta||_2^2).
\end{equation}
Using \eqref{eq72.4} and \eqref{dxu4} the above inequation becomes
\begin{equation}
||\varphi_\delta||_2=||\partial^2_x\psi_\delta||_2\leq ||\partial_x\psi_\delta||_{W^{1,2}}\leq C.
\end{equation}
\end{proof}
\begin{lemma}\label{le1.4}
For $0<\delta<1$, there exists $C(T)>0$ independent of $\delta$ such that
\begin{equation}
\partial_x \varphi_\delta (x)\geq -4\ ||\varphi_\delta||_\infty-\frac{(a+1)^2}{4\ \varepsilon}-C(T).
\end{equation}
\end{lemma}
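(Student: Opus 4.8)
The plan is to obtain an explicit pointwise formula for $\partial_x \varphi_\delta(x)$ from the elliptic equation alone, and then estimate each of its terms from below. First I would freeze $t$ and integrate the second equation of \eqref{bc4}, written in the form $-\varepsilon\,\partial_z^2\varphi_\delta(z)+\varphi_\delta(z)=\partial_z E(u_\delta(z))$, over $z\in(y,x)$ for $x,y\in(-1,1)$, to get
\[
-\varepsilon\bigl(\partial_x\varphi_\delta(x)-\partial_x\varphi_\delta(y)\bigr)+\int_y^x\varphi_\delta\,dz = E(u_\delta(x))-E(u_\delta(y)).
\]
Then I would integrate this identity in $y$ over $(-1,1)$ and invoke the boundary condition $\varphi_\delta(t,\pm1)=0$, which forces $\int_{-1}^1\partial_x\varphi_\delta(y)\,dy=\varphi_\delta(t,1)-\varphi_\delta(t,-1)=0$. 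This produces the representation
\[
\partial_x\varphi_\delta(x)=\frac{1}{2\varepsilon}\int_{-1}^1\!\int_y^x\varphi_\delta\,dz\,dy-\frac{1}{\varepsilon}E(u_\delta(x))+\frac{1}{2\varepsilon}\int_{-1}^1 E(u_\delta)\,dy,
\]
which is exactly the one-dimensional double-integration device already exploited in the uniqueness proof of Proposition \ref{pr1.4}.

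It then remains to bound the three terms from below. The double integral is controlled by $\left|\int_y^x\varphi_\delta\,dz\right|\le 2\,\|\varphi_\delta\|_\infty$ and a further integration in $y$, which produces a term proportional to $\|\varphi_\delta\|_\infty$ (the $\varepsilon$-dependent constant being harmless here since $\varepsilon$ is fixed throughout). The averaged term is handled by writing $E(u_\delta)=-u_\delta^2+(a+1)\,u_\delta-a$ and using the nonnegativity of $u_\delta$ together with the uniform $L^2$ bound \eqref{dxu4}, so that $\int_{-1}^1 E(u_\delta)\,dy\ge -\|u_\delta\|_2^2-2a\ge -C(T)$; this contributes the $-C(T)$ piece.

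The only delicate term is the pointwise contribution $-E(u_\delta(x))/\varepsilon$, and this is where the whole point of the lemma lies. At this stage no $L^\infty$ bound on $u_\delta$ is yet available, so $E(u_\delta(x))$ cannot be estimated in absolute value by a constant. The saving observation is that, since $E(u)=(1-u)(u-a)$ is a downward parabola, it satisfies $E(u)=-\bigl(u-\tfrac{a+1}{2}\bigr)^2+\tfrac{(a+1)^2}{4}-a\le \tfrac{(a+1)^2}{4}$ for every $u$, independently of the size of $u_\delta$. This one-sided bound gives $-E(u_\delta(x))/\varepsilon\ge -(a+1)^2/(4\varepsilon)$ and hence the claimed inequality. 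I expect this to be the crux of the matter: the estimate is inherently one-sided, yielding a lower bound for $\partial_x\varphi_\delta$ but not an upper one, precisely because $E$ is bounded above but not below (it tends to $-\infty$ as $u\to\infty$), so the symmetric control of $+E(u_\delta(x))/\varepsilon$ is simply not available. The concavity of $E$ is therefore exactly what makes this lower bound uniform in $\delta$ without any prior sup-norm control on $u_\delta$.
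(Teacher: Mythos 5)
Your proposal is correct and follows essentially the same route as the paper: integrating the elliptic equation from $y$ to $x$, averaging in $y$ (using $\varphi_\delta(t,\pm1)=0$), and then bounding the double integral by $\|\varphi_\delta\|_\infty$, the averaged term by the $L^\infty(L^2)$ bound \eqref{dxu4}, and the pointwise term via the upper bound $E(u)\le\frac{(a+1)^2}{4}$ — which is exactly the paper's completed square $\frac{1}{\varepsilon}u_\delta^2-\frac{a+1}{\varepsilon}u_\delta\ge-\frac{(a+1)^2}{4\varepsilon}$ written intrinsically. Your closing remark correctly identifies why the estimate is one-sided, which is indeed the crux of the lemma.
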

\begin{proof}
For $x,y \in (-1,1)$, we integrate the second equation in \eqref{bc4} to obtain
\begin{eqnarray}
-\varepsilon\ \int_y^x \partial^2_{x}\varphi_\delta(z)\ dz+\int_y^x \varphi_\delta(z)\ dz&=&\int_y^x \left( -2u_\delta+a+1\right)\ \partial_xu_\delta\ dz\nonumber\\
-\varepsilon\ \left(\partial_x \varphi_\delta(x)-\partial_x \varphi_\delta(y)\right)+\int_y^x \varphi_\delta(z)\ dz&=&-[u_\delta^2(x)-u_\delta^2(y)]+(a+1)\ (u_\delta(x)-u_\delta(y)).\nonumber
\end{eqnarray}
Next we integrate the above equality with respect to $y$ over $(-1,1)$ to obtain
\begin{equation*}
-2\ \varepsilon\ \partial_x \varphi_\delta(x)=-\int_{-1}^1\int_y^x \varphi_\delta(z)\ dzdy-2\ u_\delta^2(x)+2\ (a+1)\ u_\delta(x)+||u_\delta||_2^2-(a+1)\ ||u_\delta||_1.
\end{equation*}
Since $\frac{1}{\varepsilon}\ u_\delta^2-\frac{(a+1)}{\varepsilon} u_\delta\geq -\frac{(a+1)^2}{4\ \varepsilon},$ and
\begin{equation*}
\int_{-1}^1\int_y^x \varphi_\delta(z)\ dzdy \geq-\int_{-1}^1 \int_{-1}^1||\varphi_\delta||_\infty\ dzdy\geq -4\ ||\varphi_\delta||_\infty,
\end{equation*}
it follows from \eqref{dxu4} that
\begin{eqnarray}
\partial_x \varphi_\delta(x)&=&\frac{1}{2\ \varepsilon}\int_{-1}^1\int_y^x \varphi_\delta(z)\ dzdy+\frac{1}{\varepsilon}\ u_\delta^2(x)-\frac{a+1}{\varepsilon}\ u_\delta(x)-\frac{1}{2\ \varepsilon}||u_\delta||_2^2+\frac{a+1}{2\ \varepsilon}\ ||u_\delta||_1\nonumber\\
&\geq& -\frac{2}{\varepsilon} \ ||\varphi_\delta||_\infty-\frac{(a+1)^2}{4\ \varepsilon}-C(T).\nonumber
\end{eqnarray}
\end{proof}
We continue with estimates for the derivatives of $u_\delta$.
\begin{lemma}\label{le2.4}
There is $C_3(T)>0$ independent of $\delta$ such that
\begin{equation}\label{eq.4}
||\partial_x u_\delta(t)||_1\leq C_3(T)\ \ \mathrm{for\ all}\ t\in [0,T].
\end{equation}
\end{lemma}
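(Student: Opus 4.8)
The plan is to control $\|\partial_x u_\delta(t)\|_1$ by deriving a differential inequality for it and closing it with Gronwall's inequality, the decisive ingredient being the lower bound for $\partial_x\varphi_\delta$ furnished by Lemma \ref{le1.4}. Writing $w_\delta:=\partial_x u_\delta$ and $g(u)=u\,(u-a)(1-u)$, I would differentiate the first equation of \eqref{bc4} in $x$, expand $\partial_x^2(u_\delta\varphi_\delta)=\varphi_\delta\,\partial_x w_\delta+2\,w_\delta\,\partial_x\varphi_\delta+u_\delta\,\partial_x^2\varphi_\delta$, and obtain
\begin{equation*}
\partial_t w_\delta=\delta\,\partial_x^2 w_\delta-\varphi_\delta\,\partial_x w_\delta-2\,w_\delta\,\partial_x\varphi_\delta-u_\delta\,\partial_x^2\varphi_\delta+r\,g'(u_\delta)\,w_\delta .
\end{equation*}
The second equation of \eqref{bc4} then eliminates the second-order term through $\varepsilon\,\partial_x^2\varphi_\delta=\varphi_\delta+(2u_\delta-a-1)\,w_\delta$. (These manipulations are formal; they are justified by the parabolic smoothing that makes $u_\delta$ smooth for $t>0$, while all boundary contributions vanish since $\varphi_\delta(t,\pm1)=0$ and $w_\delta(t,\pm1)=\partial_x u_\delta(t,\pm1)=0$.)

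Next I would multiply by $\mathrm{sign}(w_\delta)$ (rigorously, by a smooth odd monotone approximation of the sign) and integrate over $(-1,1)$. The diffusion term is nonpositive because $w_\delta$ vanishes at $x=\pm1$. The key point is that the transport and cross terms combine: an integration by parts gives
\begin{equation*}
-\int_{-1}^1\varphi_\delta\,\partial_x w_\delta\,\mathrm{sign}(w_\delta)\,dx-2\int_{-1}^1 w_\delta\,\partial_x\varphi_\delta\,\mathrm{sign}(w_\delta)\,dx=-\int_{-1}^1\partial_x\varphi_\delta\,|w_\delta|\,dx,
\end{equation*}
and here Lemma \ref{le1.4} applies exactly: its lower bound on $\partial_x\varphi_\delta$ is precisely an upper bound on $-\partial_x\varphi_\delta$, so this quantity is at most $\big(4\,\|\varphi_\delta\|_\infty+\tfrac{(a+1)^2}{4\,\varepsilon}+C(T)\big)\,\|w_\delta\|_1$.

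The remaining terms are handled algebraically. After substitution, $-u_\delta\,\partial_x^2\varphi_\delta$ splits into $-\varepsilon^{-1}u_\delta\varphi_\delta$, whose integral is bounded by $\varepsilon^{-1}\|u_\delta\|_2\,\|\varphi_\delta\|_2\le C$ by \eqref{dxu4} and \eqref{eq70.4}, plus $\varepsilon^{-1}\big((a+1)u_\delta-2u_\delta^2\big)\,w_\delta$. The crucial observation, which dispenses with any $L^\infty$ bound on $u_\delta$, is that $(a+1)u_\delta-2u_\delta^2\le (a+1)^2/8$ for all values of $u_\delta$, so this term contributes at most $\tfrac{(a+1)^2}{8\,\varepsilon}\|w_\delta\|_1$; likewise $g'$ is a downward parabola, hence bounded above, and the reaction term is at most $r\,C_g\,\|w_\delta\|_1$. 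Collecting everything yields
\begin{equation*}
\frac{\mathrm{d}}{\mathrm{d}t}\|w_\delta\|_1\le\big(4\,\|\varphi_\delta\|_\infty+C_\varepsilon(T)\big)\,\|w_\delta\|_1+C,
\end{equation*}
with constants independent of $\delta$. Since \eqref{phi4} gives $\int_0^T\|\varphi_\delta\|_\infty^2\,dt\le C_1(T)$, Cauchy--Schwarz makes $\int_0^T\|\varphi_\delta\|_\infty\,dt$ bounded uniformly in $\delta$, so the coefficient of $\|w_\delta\|_1$ is integrable in time; Gronwall's inequality, together with $\|w_\delta(0)\|_1=\|\partial_x u_0\|_1\le C\,\|u_0\|_{W^{1,2}}$, then produces \eqref{eq.4}.

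I expect the main obstacle to be the rigorous treatment of the differentiation/sign-function step — justifying the differential inequality under only $C((0,T),W^{2,2})$ regularity and checking that all boundary terms vanish — together with the correct grouping of the transport and cross terms so that exactly $-\int\partial_x\varphi_\delta\,|w_\delta|$ appears, since this is the unique form in which the one-sided estimate of Lemma \ref{le1.4} can be exploited.
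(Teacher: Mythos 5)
Your proposal is correct and follows essentially the same route as the paper's proof: differentiate the equation, test with a smooth approximation of $\mathrm{sign}(\partial_x u_\delta)$, eliminate $\partial_x^2\varphi_\delta$ via the elliptic equation, exploit the one-sided bound of Lemma \ref{le1.4} against $\int \partial_x\varphi_\delta\,|\partial_x u_\delta|\,dx$, bound the quadratic-in-$u_\delta$ coefficients by their maxima, and close with Gronwall using the $L^2(0,T)$ bound \eqref{phi4} on $\|\varphi_\delta\|_\infty$. The only deviations are cosmetic: you bound $\int u_\delta\varphi_\delta\,\mathrm{sign}(\partial_x u_\delta)\,dx$ by $\|u_\delta\|_2\|\varphi_\delta\|_2$ (via \eqref{dxu4} and \eqref{eq70.4}) instead of $\|u_\delta\|_1\|\varphi_\delta\|_\infty$, and you get the sharper constant $(a+1)^2/8$ from the parabola's maximum where the paper's region-splitting gives $(a+1)^2/2$.
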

\begin{proof}
We set $g_\delta=\partial_xu_\delta$ to simplify the notation, and differentiate the first equation in \eqref{bc4} with respect to $x$. This yields
\begin{equation}\label{eq1.4}
\partial_t g_\delta+\partial_{x}^2 (u_\delta\ \varphi_\delta)-r\ \partial_x\left(u_\delta\ (1-u_\delta)\ (u_\delta-a)\right)=\delta\ \partial^2_{x} g_\delta.
\end{equation}
We define an approximation of the sign function by $\sigma_\gamma(z)=\sigma(\frac{z}{\gamma})$, $0<\gamma\ll 1$, with $\sigma$ smooth and increasing, $\sigma(0)=0$, and $\sigma(z)=\mathrm{sign}\ z$ for $|z|>1$. Then, with $\mathrm{abs}_\gamma(z)=\int_0^z\sigma_\gamma(\xi)\ d\xi$, the convergence of $\mathrm{abs}_\gamma(z)$ to $|z|$ as $\gamma\rightarrow 0$ is uniform in $z\in \mathbb{R}$.\\
Multiplying \eqref{eq1.4} by $\sigma_\gamma(g_\delta)$ and integrating with respect to $x$ yields
\begin{eqnarray}
\int_{-1}^1 \sigma_\gamma(g_\delta)\ \partial_t g_\delta\ dx&+&\int_{-1}^1\sigma_\gamma(g_\delta)\ [\partial_{x}(g_\delta\ \varphi_\delta)+ g_\delta\ \partial_x\varphi_\delta+u_\delta\ \partial^2_{x}\varphi_\delta]\ dx\nonumber\\
&-&r\int_{-1}^1 \sigma_\gamma(g_\delta)\ (-3\ u_\delta^2+2\ (a+1)\ u_\delta-a)\ g_\delta\ dx\nonumber\\
&=&\delta\ \int_{-1}^1 \sigma_\gamma(g_\delta)\ \partial_{x}^2 g_\delta\ dx.\label{eq2.4}
\end{eqnarray}
Since 
\begin{equation*}
\int_{-1}^1\sigma_\gamma(g_\delta)\ \partial_{x}(g_\delta\ \varphi_\delta)\ dx=-\int_{-1}^1\sigma'_\gamma(g_\delta)\ \partial_xg_\delta\ g_\delta\ \varphi_\delta\ dx,
\end{equation*}
and
\begin{equation*}
\int_{-1}^1 \sigma_\gamma(g_\delta)\ \partial_{x}^2 g_\delta\ dx=-\int_{-1}^1 (\partial_x g_\delta)^2\ \sigma'_\gamma(g_\delta)\ dx,
\end{equation*}
we obtain
\begin{eqnarray}
\frac{\mathrm{d}}{\mathrm{d}t}\int_{-1}^1 \mathrm{abs}_\gamma(g_\delta)\ dx&-&\int_{-1}^1\sigma'_\gamma(g_\delta)\ \partial_xg_\delta\ g_\delta\ \varphi_\delta\ dx\nonumber\\
&+&\int_{-1}^1\sigma_\gamma(g_\delta)\ g_\delta\ \partial_x\varphi_\delta\ dx+\int_{-1}^1\sigma_\gamma(g_\delta)\ u_\delta\ \partial^2_{x}\varphi_\delta\ dx\nonumber\\
&-&r\int_{-1}^1 \sigma_\gamma(g_\delta)\ (-3\ u_\delta^2+2\ (a+1)\ u_\delta-a)\ g_\delta\ dx\nonumber\\
&=&-\delta\ \int_{-1}^1 (\partial_xg_\delta)^2\ \sigma'_\gamma(g_\delta)\ dx\leq 0.\label{eq3.4}
\end{eqnarray}
The function $f_\gamma(z)=\sigma_\gamma(z)\ z-\mathrm{abs}_\gamma (z)$ satisfies $f'_\gamma(z)=\sigma'_\gamma(z)\ z$ and converges to $0$ uniformly in $z\in \mathbb{R}$. We integrate the second term in \eqref{eq3.4} by parts and we use the second equation in \eqref{bc4} in the fourth one to obtain
\begin{eqnarray}
\frac{\mathrm{d}}{\mathrm{d}t}\int_{-1}^1 \mathrm{abs}_\gamma(g_\delta)\ dx&+&\int_{-1}^1 f_\gamma(g_\delta)\ \partial_x\varphi_\delta\ dx
+\int_{-1}^1\sigma_\gamma(g_\delta)\ g_\delta\ \partial_x\varphi_\delta\ dx\nonumber\\
&+&\frac{1}{\varepsilon}\int_{-1}^1\sigma_\gamma(g_\delta)\ u_\delta\ \left(2u_\delta-a-1\right)\ g_\delta\ dx\nonumber\\
&+&\frac{1}{\varepsilon}\int_{-1}^1\sigma_\gamma(g_\delta)\ u_\delta\ \varphi_\delta\  dx\nonumber\\
&\leq&r\int_{-1}^1 \sigma_\gamma(g_\delta)\ (-3\ u_\delta^2+2\ (a+1)\ u_\delta-a)\ g_\delta\ dx.\label{eq5.4}
\end{eqnarray}
Now,
$|g_\delta|\geq \sigma_\gamma(g_\delta)\ g_\delta\geq 0$ and
 $\left(2u_\delta-a-1\right)\geq 0$ if $u_\delta\geq \frac{a+1}{2}$, so that
\begin{eqnarray}
\int_{-1}^1\sigma_\gamma(g_\delta)\ u_\delta\ \left(2u_\delta-a-1\right)\ g_\delta\ dx&= &\int_{u_\delta\geq \frac{a+1}{2}}\sigma_\gamma(g_\delta)\ u_\delta\ \left(2u_\delta-a-1\right)\ g_\delta\ dx\nonumber\\
&+&2\ \int_{0<u_\delta\leq \frac{a+1}{2}} u^2_\delta \ \sigma_\gamma(g_\delta)\ g_\delta\ dx\nonumber\\
&-&(a+1)\int_{0<u_\delta\leq \frac{a+1}{2}} \sigma_\gamma(g_\delta)\ g_\delta\ (u_\delta)\ dx\nonumber\\
&\geq&-\frac{(a+1)^2}{2}\int_{0<u_\delta\leq \frac{a+1}{2}} |g_\delta| dx\nonumber
\end{eqnarray}
and thus
\begin{equation}\label{eq4.4}
\int_{-1}^1\sigma_\gamma(g_\delta)\ u_\delta\ \left(2u_\delta-a-1\right)\ g_\delta\ dx\geq -\frac{(a+1)^2}{2}\int_{-1}^1\ |g_\delta|\ dx.
\end{equation}
Also, since $-3\ u_\delta^2+2\ (a+1)\ u_\delta-a\leq \frac{(a+1)^2}{2}$ and $|g_\delta|\geq \sigma_\gamma(g_\delta)\ g_\delta\geq 0$,
\begin{equation}\label{eq90.4}r\int_{-1}^1 \sigma_\gamma(g_\delta)\ (-3\ u_\delta^2+2\ (a+1)\ u_\delta-a)\ g_\delta\ dx\leq \frac{r\ (a+1)^2}{2}\int_{-1}^1 |g_\delta|\ dx.\end{equation}
Passing to the limit $\gamma\rightarrow 0$ in \eqref{eq5.4} the first term on the right-hand side vanishes. 
And it follows from Lemma \ref{le1.4}, \eqref{eq4.4}, \eqref{eq90.4} and \eqref{eq6.4} that
\begin{eqnarray}
\frac{\mathrm{d}}{\mathrm{d}t}\int_{-1}^1 |g_\delta|\ dx&-&\left(\frac{2}{\varepsilon} \ ||\varphi_\delta||_\infty+\frac{(a+1)^2}{4\ \varepsilon}+C_2(T)\right)\ \int_{-1}^1 |g_\delta|\ dx-\frac{(a+1)^2}{2\ \varepsilon}\int_{-1}^1\ |g_\delta|\ dx\nonumber\\
&\leq&\frac{||u_\delta||_1}{\varepsilon}\ ||\varphi_\delta||_\infty+\frac{r\ (a+1)^2}{2}\int_{-1}^1 |g_\delta|\ dx\nonumber\\
&\leq&\frac{C(T)}{\varepsilon}\ ||\varphi_\delta||_\infty+\frac{r\ (a+1)^2}{2}\int_{-1}^1 |g_\delta|\ dx.\label{eq7.4}
\end{eqnarray}
Integrating \eqref{eq7.4} in time, and using \eqref{phi4} yield that there exists $C_3(T)$ such that \eqref{eq.4} holds.
\end{proof}
\begin{lemma}\label{le3.4}
There is $C_4(T)>0$ independent of $\delta$ such that
\begin{equation}\label{eq9.4}
||u_\delta(t)||_\infty\leq C_4(T)\ \ \mathrm{for\ all}\ t\in [0,T],
\end{equation}
and
\begin{equation}\label{eq63.4}
||\partial_x\varphi_\delta(t)||_\infty\leq C_4(T)\ \ \mathrm{for\ all}\ t\in [0,T].
\end{equation}
\end{lemma}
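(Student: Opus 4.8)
The plan is to derive both bounds from the $W^{1,1}$ estimate on $u_\delta$ furnished by Lemma \ref{le2.4}, combined with the one-dimensional Sobolev embeddings and the exact representation of $\partial_x\varphi_\delta$ already obtained inside the proof of Lemma \ref{le1.4}. The point is that Lemma \ref{le2.4} has done the only real work; what remains is to feed its conclusion into the elliptic structure.

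For the bound \eqref{eq9.4} I would simply combine the uniform $L^1$ estimate \eqref{eq6.4} on $u_\delta(t)$ with the uniform bound $||\partial_x u_\delta(t)||_1\leq C_3(T)$ from Lemma \ref{le2.4}: together they say that $u_\delta(t)$ is bounded in $W^{1,1}(-1,1)$ uniformly in $\delta\in(0,1)$ and $t\in[0,T]$. The continuous embedding $W^{1,1}(-1,1)\hookrightarrow L^\infty(-1,1)$ then gives $||u_\delta(t)||_\infty\leq C_4(T)$ immediately, with no further computation.

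For \eqref{eq63.4} the subtlety is that we only control $\partial_x u_\delta$ in $L^1$, not in $L^2$, so one cannot bound $\partial_x\varphi_\delta$ by applying $W^{2,2}$ elliptic regularity directly to $-\varepsilon\partial_x^2\varphi_\delta+\varphi_\delta=\partial_x E(u_\delta)$. I would therefore first establish a pointwise-in-time $L^\infty$ bound on $\varphi_\delta$ itself. Multiplying the second equation in \eqref{bc4} by $\varphi_\delta$, integrating over $(-1,1)$, and moving the derivative onto $E(u_\delta)$ (the boundary terms vanishing because $\varphi_\delta(t,\pm1)=0$) yields $\varepsilon\,||\partial_x\varphi_\delta||_2^2+||\varphi_\delta||_2^2=-\int_{-1}^1 E(u_\delta)\,\partial_x\varphi_\delta\,dx$. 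Since $||u_\delta(t)||_\infty$ is now bounded by \eqref{eq9.4}, the quantity $||E(u_\delta(t))||_2$ is bounded, and Young's inequality gives a uniform bound on $||\varphi_\delta(t)||_{W^{1,2}}$; the embedding $W^{1,2}(-1,1)\hookrightarrow L^\infty(-1,1)$ then produces $||\varphi_\delta(t)||_\infty\leq C$ for all $t\in[0,T]$.

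Finally, I would insert this $L^\infty$ bound on $\varphi_\delta$ into the identity for $\partial_x\varphi_\delta(x)$ derived within the proof of Lemma \ref{le1.4}, namely $\partial_x\varphi_\delta(x)=\frac{1}{2\varepsilon}\int_{-1}^1\int_y^x\varphi_\delta(z)\,dz\,dy+\frac{1}{\varepsilon}u_\delta^2(x)-\frac{a+1}{\varepsilon}u_\delta(x)-\frac{1}{2\varepsilon}||u_\delta||_2^2+\frac{a+1}{2\varepsilon}||u_\delta||_1$. Each term is now uniformly bounded: the double integral by $\frac{2}{\varepsilon}||\varphi_\delta||_\infty$, the pointwise terms $u_\delta^2(x)$ and $u_\delta(x)$ by \eqref{eq9.4}, and $||u_\delta||_2$, $||u_\delta||_1$ by \eqref{dxu4} and \eqref{eq6.4}. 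Taking the supremum over $x\in(-1,1)$ then gives \eqref{eq63.4}. I do not expect a genuine obstacle here: the only nontrivial ingredient is the $W^{1,1}\hookrightarrow L^\infty$ step feeding \eqref{eq9.4}, which rests entirely on Lemma \ref{le2.4}, while everything else is a direct consequence of the elliptic representation and the already-available estimates. The one thing to watch is to route the $L^\infty$ control of $\varphi_\delta$ through its $W^{1,2}$ norm rather than trying to use $L^2$ control of $\partial_x E(u_\delta)$, which is unavailable.
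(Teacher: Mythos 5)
Your proposal is correct. For \eqref{eq9.4} it coincides exactly with the paper: \eqref{eq6.4} and Lemma \ref{le2.4} give a uniform $W^{1,1}$ bound on $u_\delta(t)$, and the embedding $W^{1,1}(-1,1)\hookrightarrow L^\infty(-1,1)$ concludes. For \eqref{eq63.4} you take a genuinely different route. The paper stays with the differential form of the elliptic equation: it writes $\partial_x^2\varphi_\delta=\frac{1}{\varepsilon}\left(\varphi_\delta-(-2u_\delta+a+1)\,\partial_xu_\delta\right)$, observes that the right-hand side is bounded in $L^1$ (by Lemma \ref{le20.4}, \eqref{eq9.4} and Lemma \ref{le2.4}), so that $\partial_x\varphi_\delta(t)$ is bounded in $W^{1,1}(-1,1)$, and applies the same $W^{1,1}\hookrightarrow L^\infty$ embedding a second time. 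You instead reuse the pointwise representation of $\partial_x\varphi_\delta$ obtained by double integration in the proof of Lemma \ref{le1.4}, after first deriving a uniform-in-time $L^\infty$ bound on $\varphi_\delta$ via the energy identity and $W^{1,2}(-1,1)\hookrightarrow L^\infty(-1,1)$; all terms in the representation are then bounded by \eqref{eq9.4}, \eqref{dxu4} and \eqref{eq6.4}. Both arguments are sound and rest on the same prior lemmas; the paper's is shorter, while yours makes the bound fully explicit and yields as a by-product the pointwise-in-time estimate $\|\varphi_\delta(t)\|_\infty\leq C(T)$, which the paper never states (it only has the $L^2$-in-time bound \eqref{phi4}). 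Note, though, that your intermediate step (a) is dispensable: in the representation formula the double integral is bounded by $2\,\|\varphi_\delta\|_1\leq 2\sqrt{2}\,\|\varphi_\delta\|_2$, which Lemma \ref{le20.4} already controls, so you could have fed Lemma \ref{le20.4} in directly and skipped the energy estimate entirely.
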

\begin{proof}
For all $T>0$, \eqref{eq6.4} and Lemma \ref{le2.4} guarantee that
\begin{equation}\label{eq8.4}
||u_\delta(t)||_{W^{1,1}}\leq C(T)\ \ \mathrm{for\ all}\ t\in [0,T].
\end{equation}
Therefore, the continuous embedding of $W^{1,1}(-1,1)$ in $L^\infty(-1,1)$ implies that \eqref{eq9.4} holds. On the other hand, Lemma \ref{le2.4}, Lemma \ref{le20.4}, \eqref{eq9.4} and the second equation in \eqref{bc4} ensure that $\partial_x \varphi_\delta(t)$ is bounded in $W^{1,1}(-1,1)$ and by the continuous embedding of $W^{1,1}(-1,1)$ in $L^\infty(-1,1)$ we get \eqref{eq63.4}.
\end{proof}
\begin{lemma}\label{le10.4}
There is $C_5(T)>0$ independent of $\delta$ such that
\begin{equation}\label{eq64.4}
||\partial_x u_\delta(t)||_2\leq C_5(T)\ \ \mathrm{for\ all}\ t\in [0,T].
\end{equation}
\end{lemma}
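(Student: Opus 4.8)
The plan is to run an energy estimate on $g_\delta:=\partial_x u_\delta$. Differentiating the first equation of \eqref{bc4} with respect to $x$ (which is exactly \eqref{eq1.4}), multiplying by $g_\delta$ and integrating over $(-1,1)$ gives
$$\frac{1}{2}\frac{\mathrm{d}}{\mathrm{d}t}\|g_\delta\|_2^2 = \delta\int_{-1}^1 g_\delta\,\partial_x^2 g_\delta\,dx - \int_{-1}^1 g_\delta\,\partial_x^2(u_\delta\,\varphi_\delta)\,dx + r\int_{-1}^1 g_\delta\,\partial_x\big(u_\delta\,(1-u_\delta)(u_\delta-a)\big)\,dx.$$
Since $g_\delta(t,\pm1)=\partial_x u_\delta(t,\pm1)=0$, the diffusion term integrates by parts to $-\delta\|\partial_x g_\delta\|_2^2\le 0$ and is simply discarded; this is the key point that keeps the bound uniform in $\delta$, since no negative power of $\delta$ is ever produced. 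The reaction term equals $r\int_{-1}^1 \Phi(u_\delta)\,g_\delta^2\,dx$ with $\Phi(u)=\frac{\mathrm d}{\mathrm du}\big(u(1-u)(u-a)\big)$ a fixed polynomial, hence is bounded by $C(T)\,\|g_\delta\|_2^2$ thanks to the $L^\infty$ bound \eqref{eq9.4} on $u_\delta$.

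The main difficulty is the middle term. Expanding $\partial_x^2(u_\delta\varphi_\delta)=\varphi_\delta\,\partial_x g_\delta+2\,g_\delta\,\partial_x\varphi_\delta+u_\delta\,\partial_x^2\varphi_\delta$, one meets the second derivative $\partial_x^2\varphi_\delta$, which is not directly controlled, and an $\varphi_\delta\,\partial_x g_\delta$ contribution that would produce $\partial_x g_\delta$. The idea is to eliminate $\partial_x^2\varphi_\delta$ via the elliptic equation: the second line of \eqref{bc4} gives $\partial_x^2\varphi_\delta=\varepsilon^{-1}\big(\varphi_\delta+(2u_\delta-a-1)\,g_\delta\big)$. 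After substituting, and integrating the remaining $\varphi_\delta\,g_\delta\,\partial_x g_\delta$ term by parts (again using $\varphi_\delta(t,\pm1)=0$, so that $\int_{-1}^1 \varphi_\delta\,g_\delta\,\partial_x g_\delta\,dx=-\tfrac12\int_{-1}^1 (\partial_x\varphi_\delta)\,g_\delta^2\,dx$), every surviving term is a multiple of $\int_{-1}^1 (\partial_x\varphi_\delta)\,g_\delta^2\,dx$, of $\int_{-1}^1 u_\delta(2u_\delta-a-1)\,g_\delta^2\,dx$, or of $\int_{-1}^1 u_\delta\,\varphi_\delta\,g_\delta\,dx$; no derivative of $g_\delta$ remains.

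These are all controlled by the estimates already in hand. The first two are bounded by $C(T)\,\|g_\delta\|_2^2$ using $\|\partial_x\varphi_\delta\|_\infty\le C_4(T)$ from \eqref{eq63.4} and $\|u_\delta\|_\infty\le C_4(T)$ from \eqref{eq9.4}; the last by $\varepsilon^{-1}\|u_\delta\|_\infty\,\|\varphi_\delta\|_2\,\|g_\delta\|_2\le C(T)+C(T)\|g_\delta\|_2^2$ via \eqref{eq70.4} and Young's inequality. Altogether this yields $\frac{\mathrm d}{\mathrm dt}\|g_\delta\|_2^2\le C(T)\big(1+\|g_\delta\|_2^2\big)$, and Gronwall's lemma, together with $\|g_\delta(0)\|_2=\|\partial_x u_0\|_2<\infty$ (since $u_0\in W^{1,2}(-1,1)$), gives \eqref{eq64.4} with a constant $C_5(T)$ independent of $\delta$.

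A technical point to address is that the computation formally uses $\partial_x^2 g_\delta=\partial_x^3 u_\delta$, whereas the solution is only known to belong to $C\big((0,T),W^{2,2}(-1,1)\big)$. For each fixed $\delta>0$ the first equation of \eqref{bc4} is uniformly parabolic, so parabolic smoothing provides the extra regularity needed for $t>0$; alternatively one regularises the sign/derivative as in the proof of Lemma \ref{le2.4} and passes to the limit. Because the diffusion term is dropped with the correct sign, this regularisation leaves the final, $\delta$-independent constant unaffected.
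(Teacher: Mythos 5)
Your proposal is correct and follows essentially the same route as the paper: the same energy estimate for $g_\delta=\partial_x u_\delta$, discarding the diffusion term after integration by parts, eliminating $\partial_x^2\varphi_\delta$ via the elliptic equation, and controlling the remaining terms with \eqref{eq9.4}, \eqref{eq63.4} and the $L^2$ bound on $\varphi_\delta$. The only (immaterial) differences are that the paper closes the estimate with the time-integrated bound \eqref{u4} rather than \eqref{eq70.4} plus Young's inequality, and that you additionally flag the regularity justification for using $\partial_x^2 g_\delta$, which the paper passes over silently.
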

\begin{proof}
Coming back to \eqref{eq1.4} , $g_\delta=\partial_xu_\delta$ satisfies
\begin{equation}\label{eq65.4}
\partial_t g_\delta=-\partial_{x}g_\delta\ \varphi_\delta-2\ g_\delta\ \partial_x \varphi_\delta-u_\delta\ \partial_x^2\varphi_\delta+r\ \partial_x\left(u_\delta\ (1-u_\delta)\ (u_\delta-a)\right)+\delta\ \partial^2_{x} g_\delta.
\end{equation}
We multiply \eqref{eq65.4} by $g_\delta$ and integrate it over $(-1,1)$ to obtain
\begin{eqnarray}
\frac{1}{2}\frac{\mathrm{d}}{\mathrm{d}t} ||g_\delta||_2^2&=&-\int_{-1}^1 \partial_x g_\delta\ \varphi_\delta\ g_\delta\ dx-2\int_{-1}^1  |g_\delta|^2\ \partial_x\varphi_\delta\ dx-\int_{-1}^1  u_\delta\ \partial_x^2\varphi_\delta\ g_\delta\ dx\nonumber\\
&+&r\ \int_{-1}^1 \partial_x\left(u_\delta\ (1-u_\delta)\ (u_\delta-a)\right)\ g_\delta\ dx+\delta\ \int_{-1}^1\partial^2_{x} g_\delta\ g_\delta\ dx.\nonumber
\end{eqnarray}
We integrate by parts the last term of the right-hand side, and use the second equation in \eqref{bc4} to obtain
\begin{eqnarray}
\frac{1}{2}\frac{\mathrm{d}}{\mathrm{d}t} ||g_\delta||_2^2&=&-\int_{-1}^1 \partial_x (\frac{|g_\delta|^2}{2})\ \varphi_\delta\ dx-2\int_{-1}^1  |g_\delta|^2\ \partial_x\varphi_\delta\ dx+\frac{1}{\varepsilon}\int_{-1}^1  u_\delta\ (-\varphi_\delta+E'(u_\delta)\ g_\delta)\ g_\delta\ dx\nonumber\\
&+&r\ \int_{-1}^1 \left(-3\ u^2_\delta+2(a+1)u_\delta-a\right)\ |g_\delta|^2\ dx-\delta\ \int_{-1}^1|\partial_{x} g_\delta|^2\ dx.\nonumber
\end{eqnarray}
We integrate the first term in the right-hand side by parts and use H\"older inequality  to obtain
\begin{eqnarray}
\frac{1}{2}\frac{\mathrm{d}}{\mathrm{d}t} ||g_\delta||_2^2&=&\int_{-1}^1  \frac{|g_\delta|^2}{2}\ \partial_x\varphi_\delta\ dx-2\int_{-1}^1  |g_\delta|^2\ \partial_x\varphi_\delta\ dx-\frac{1}{\varepsilon}\int_{-1}^1  u_\delta\ \varphi_\delta\ g_\delta\ dx\nonumber\\
&+&\frac{1}{\varepsilon}\int_{-1}^1 u_\delta\ (-2u_\delta+a+1)\ |g_\delta|^2\ dx+r\ \int_{-1}^1 \left(-3\ u^2_\delta+2(a+1)u_\delta-a\right)\ |g_\delta|^2\ dx\nonumber\\
&\leq& \frac{3}{2}\ ||g_\delta||^2_2\ ||\partial_x\varphi_\delta||_\infty+\frac{1}{\varepsilon}\ ||u_\delta||_\infty\ ||\varphi_\delta||_2\ ||g_\delta||_2\nonumber\\
&+&\frac{1}{\varepsilon}\ ||u_\delta\ (-2u_\delta+a+1)||_\infty\ ||g_\delta||_2^2+r\ ||-3\ u^2_\delta+2(a+1)u_\delta-a||_\infty\ ||g_ \delta||_2^2.\nonumber
\end{eqnarray}
Using \eqref{eq9.4}, \eqref{eq63.4} and Young inequality we obtain
\begin{equation}\label{eq66.4}
\frac{\mathrm{d}}{\mathrm{d}t} ||g_\delta||_2^2\leq C\ ||g_\delta||_2^2+C\ ||\varphi_\delta||_2^2,
\end{equation}
it follows from \eqref{u4} after integration that \eqref{eq64.4} holds.
\end{proof}
\begin{lemma}\label{le4.4}
There is $C_6(T)>0$ independent of $\delta$ such that
\begin{equation}\label{eq71.4}
||\partial_t u_\delta(t)||^2_{(W^{1,2})'}\leq C_6(T)\ \ \mathrm{for\ all}\ t\in [0,T],
\end{equation}
where $(W^{1,2})'$ denotes the dual space of $W^{1,2}$.
\end{lemma}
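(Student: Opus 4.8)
The plan is to estimate the $(W^{1,2})'$-norm of $\partial_t u_\delta(t)$ by duality, testing the evolution equation against an arbitrary $\zeta\in W^{1,2}(-1,1)$ and controlling the resulting pairing by the a priori bounds already collected in Lemmas \ref{le20.4}, \ref{le3.4} and \ref{le10.4}. Concretely, fix $t\in[0,T]$ and $\zeta\in W^{1,2}(-1,1)$. Using the first equation of \eqref{bc4} I would write
$$\langle \partial_t u_\delta(t), \zeta\rangle = \int_{-1}^1 \Big(\delta\,\partial_x^2 u_\delta-\partial_x(u_\delta\,\varphi_\delta)+r\,u_\delta\,E(u_\delta)\Big)\,\zeta\,dx,$$
and integrate the first two terms by parts. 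The no-flux condition $\partial_x u_\delta(t,\pm1)=0$ cancels the boundary term produced by the diffusion, while the Dirichlet condition $\varphi_\delta(t,\pm1)=0$ cancels the one produced by the transport term, leaving
$$\langle \partial_t u_\delta(t),\zeta\rangle = -\delta\int_{-1}^1 \partial_x u_\delta\,\partial_x\zeta\,dx+\int_{-1}^1 u_\delta\,\varphi_\delta\,\partial_x\zeta\,dx+r\int_{-1}^1 u_\delta\,E(u_\delta)\,\zeta\,dx.$$

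Next I would bound the three terms separately, aiming at a bound of the form $C(T)\,\|\zeta\|_{W^{1,2}}$ with $C(T)$ independent of $\delta$. For the diffusion term, Cauchy--Schwarz together with $\delta<1$ and the bound $\|\partial_x u_\delta\|_2\le C_5(T)$ from Lemma \ref{le10.4} gives $\delta\,\|\partial_x u_\delta\|_2\,\|\partial_x\zeta\|_2\le C_5(T)\,\|\zeta\|_{W^{1,2}}$. For the transport term, H\"older's inequality and the bounds $\|u_\delta\|_\infty\le C_4(T)$ (Lemma \ref{le3.4}) and $\|\varphi_\delta\|_2\le C_2(T)$ (Lemma \ref{le20.4}) yield $\|u_\delta\|_\infty\,\|\varphi_\delta\|_2\,\|\partial_x\zeta\|_2\le C_2(T)\,C_4(T)\,\|\zeta\|_{W^{1,2}}$. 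For the reaction term, since $E\in C^2(\mathbb{R})$ and $u_\delta$ is bounded in $L^\infty(-1,1)$ uniformly in $\delta$ by \eqref{eq9.4}, the quantity $u_\delta\,E(u_\delta)$ is bounded in $L^\infty(-1,1)$, hence in $L^2(-1,1)$, uniformly in $\delta$, so $r\,\|u_\delta\,E(u_\delta)\|_2\,\|\zeta\|_2\le C(T)\,\|\zeta\|_{W^{1,2}}$.

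Collecting these three estimates gives $|\langle \partial_t u_\delta(t),\zeta\rangle|\le C(T)\,\|\zeta\|_{W^{1,2}}$ for every $\zeta\in W^{1,2}(-1,1)$, uniformly in $t\in[0,T]$ and $\delta\in(0,1)$. Taking the supremum over $\zeta$ with $\|\zeta\|_{W^{1,2}}\le 1$ and squaring yields \eqref{eq71.4}. I do not expect a genuine obstacle here: the estimate is a direct consequence of the previously established uniform bounds, the only points requiring minor care being the cancellation of both boundary contributions (which is precisely where the boundary conditions in \eqref{bc4} enter) and the use of $\delta<1$ to absorb the diffusion coefficient, so that the constant does not degenerate as $\delta\to 0$.
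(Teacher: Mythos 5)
Your proof is correct and follows essentially the same route as the paper: test the evolution equation against $\zeta\in W^{1,2}(-1,1)$, integrate by parts (with the boundary terms vanishing thanks to $\partial_x u_\delta(t,\pm1)=0$ and $\varphi_\delta(t,\pm1)=0$), and bound the three resulting terms via Lemma \ref{le10.4}, Lemma \ref{le3.4}, Lemma \ref{le20.4} and the restriction $\delta<1$, before concluding by duality. The paper's argument is identical in structure and in the estimates invoked, so there is nothing to add.
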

\begin{proof}
Consider $\psi \in W^{1,2}(-1,1)$ and $t\in (0, T)$. We have by the first equation in \eqref{bc4}
\begin{eqnarray}
&&\left\lvert\int_{-1}^1 \partial_t u_\delta\ \psi\ dx\right\lvert\nonumber\\
&=&\left\lvert\int_{-1}^1\left[ \partial_x\left(\delta\ \partial_x u_\delta-u_\delta\ \varphi_\delta\right)+r\ u_\delta\ E(u_\delta)\right]\ \psi\ dx\right\lvert\nonumber\\
&=& \left\lvert\int_{-1}^1 \left( -\delta\ \partial_x u_\delta\ \partial_x\psi+u_\delta\ \varphi_\delta\ \partial_x\psi +r\ u_\delta\ (1-u_\delta)\ (u_\delta-a)\ \psi\right)\ dx\right\lvert\nonumber\\
&\leq& \delta\ ||\partial_x\psi||_2\ ||\partial_x u_\delta||_2+||\partial_x\psi||_2\ \ ||u_\delta||_\infty\ ||\varphi_\delta||_2+r\ ||u_\delta\ (1-u_\delta)(u_\delta-a)||_2\ ||\psi||_2\nonumber.
\end{eqnarray} 
Using \eqref{eq64.4}, \eqref{eq9.4} and Lemma \ref{le20.4} we end up with
\begin{equation*}
\left\lvert\int_{-1}^1\partial_t u_\delta\ \psi\ dx\right\lvert \leq \left(\delta+1+r\right)\ C(T)\ ||\psi||_{W^{1,2}}\leq C(T)\ ||\psi||_{W^{1,2}}
\end{equation*}
since $0<\delta<1$. A duality argument gives 
\begin{equation*}
||\partial_t u_\delta(t)||_{(W^{1,2})'}\leq  C_6(T),\ \ t\in [0,T]
\end{equation*}
and the proof of Lemma \ref{le4.4} is complete.
\end{proof}
\subsection{Convergence}
In this section we discuss the limit of $(u_\delta, \varphi_\delta)$ as $\delta\rightarrow 0$. For that purpose, we study the compactness properties of $(u_\delta, \varphi_\delta)$.
\begin{proof}[Proof of Theorem \ref{th1.4}]
Thanks to Lemma \ref{le10.4} and \eqref{u4}, $(u_\delta)_\delta$ is bounded in\\ $L^\infty\left((0,T); W^{1,2}(-1,1)\right)$ while $(\partial_t u_\delta)_\delta$ is bounded in $L^\infty\left( (0,T); (W^{1,2})'(-1,1)\right)$ by Lemma~\ref{le4.4}. Since $W^{1,2}(-1,1)$ is compactly embedded in $C[-1,1]$ and $C[-1,1]$ is continuously embedded in $(W^{1,2})'(-1,1)$, it follows from \cite[Corollary 4]{compact} that $(u_\delta)_\delta$ is relatively compact in  $C\left([0,T]\times[-1,1]\right)$. Therefore, there are a sequence $(\delta_j)$ of positive real numbers, $\delta_j\rightarrow 0$, and 
$u\in L^\infty\left((0,T); W^{1,2}(-1,1)\right)$ such that 
\begin{equation}\label{eq11.4}u_{\delta_j}\rightharpoonup u\ \ \mathrm{in}\ L^2\left((0,T); W^{1,2}(-1,1)\right), \end{equation}
and
\begin{equation}\label{eq12.4}
u_{\delta_j}\longrightarrow u\ \ \mathrm{in}\ C\left([0,T]\times [-1,1]\right).
\end{equation}
Owing to Lemma \ref{le0.4} and Lemma \ref{le10.4}, we may also assume that
\begin{equation}\label{eq13.4}
\varphi_{\delta_j}\rightharpoonup\varphi \ \mathrm{in}\ L^2\left((0,T); W^{1,2}(-1,1)\right)\ \mathrm{as}\ \delta_j\rightarrow 0,
\end{equation}
and
\begin{equation}\label{eq14.4}
\delta_j\ \partial_xu_{\delta_j}\longrightarrow  0 \ \mathrm{in} \ L^2\left((0,T)\times (-1,1)\right)\ \mathrm{as}\ \delta_j\rightarrow 0.
\end{equation}
Owing to \eqref{eq11.4}-\eqref{eq14.4}, it is straightforward to deduce from \eqref{bc4} that $(u, \varphi)$ satisfies
\begin{equation}\label{eq35.4}\int_0^T \langle\partial_t u, \psi\rangle \ dt=\int_0^T\int_{-1}^1 \left(u\ \varphi\ \partial_x \psi+r\ u\ (1-u)(u-a)\ \psi\right)\ dx\ dt,\end{equation}
and
\begin{equation}\label{eq36.4}\varepsilon\ \int_0^T\int_{-1}^1 \partial_x \varphi\ \partial_x\psi\  dxdt+\int_0^T\int_{-1}^1\varphi\ \psi\ dxdt=\int_0^T\int_{-1}^1 (-2\ u+a+1)\ \partial_xu\ \psi\  dxdt.\end{equation}
for all test functions $\psi\in C^2\left([0,T]\times (-1,1)\right)$. Since $(u,\varphi)$ satisfies \eqref{eq35.4} and \eqref{eq36.4}, then $(u,\varphi)$ is a weak solution of \eqref{we1.4}, \eqref{we3.4} and \eqref{we4.4}. Recalling that $u\in L^\infty\left((0,T); W^{1,2}(-1,1)\right)$ and $\varphi\in L^\infty\left((0,T); W^{1,2}(-1,1)\right)$ by Lemma \ref{le3.4} and Lemma \ref{le10.4}, we deduce from \eqref{eq35.4} that $\partial_tu \in L^2\left((0,T)\times (-1,1)\right)$ and from \eqref{eq36.4} that $\varphi\in L^\infty\left((0,T); W^{2,2}(-1,1)\right) $, so that $u$ solves \eqref{we1.4} and \eqref{we4.4} in the sense of Definition \ref{de2.4} with the regularity \eqref{eq53.4}. By Proposition~\ref{pr1.4}, such a solution is unique so that $u$ is the only possible cluster point of $(u_\delta)_\delta$ in $C\left([0,T]\times[-1,1]\right)$. Therefore, the whole family $(u_\delta)_\delta$ converges to $u$ in $C\left([0,T]\times[-1,1]\right)$ as $\delta\longrightarrow 0$.
\end{proof}
\section{The monostable case, $E(u)=1-u$}
Let $T>0$, system \eqref{in5.4} now reads
\begin{equation}
\label{mc4}
\left\{
\begin{array}{llll}
\displaystyle \partial_t u_\delta&=&\delta\ \partial^2_{x} u_\delta-\partial_x(u_\delta\ \varphi_\delta)+r\ u_\delta\ (1-u_\delta)& x\in (-1,1),\ t>0 \\
\displaystyle -\varepsilon\ \partial^2_{x} \varphi_\delta+\varphi_\delta&=&-\ \partial_x u_\delta,& x\in (-1,1),\ t>0 \\
\displaystyle \partial_xu_\delta(t,\pm1)=\varphi_\delta(t,\pm1)&=&0& t>0,\\
\displaystyle u_\delta(0,x)&=&u_0(x)& x\in (-1,1),
\end{array}
\right.
\end{equation}
Thanks to Theorem \ref{th3.4}, system \eqref{mc4} has a unique global nonnegative solution in the sense of Definition \ref{de1.4}.\\
Unlike the previous case, it does not seem to be possible to begin the proof with an $L^\infty(L^2)$ estimate on $u_\delta$. Nevertheless, there is still a cancellation between the two equations which actually gives an $L^\infty(L \log L)$ bound on $u_\delta$ and a $L^2$ bound on $\partial_x\sqrt u_\delta$ as we shall see below. Integrating \eqref{mc4} over $[0,T]\times(-1,1)$ and using the nonnegativity of $u_\delta$, we first observe that,
\begin{equation}\label{eq15.4}
||u_\delta(t)||_1\leq ||u_0||_1+2\ r\ t,\ \ \mathrm{for\ all}\ t\in [0, T].
\end{equation}
\subsection{Estimates}
\begin{lemma}\label{le5.4}
There is $C_7(T)>0$ independent of $\delta$ such that
\begin{equation}\label{eq19.4}
\int_0^T \left(\varepsilon\ ||\partial_x\varphi_\delta||_2^2+||\varphi_\delta||_2^2+4\ \delta\ ||\partial_x \sqrt{u_\delta}||_2^2\right)\ dt\leq C_7(T),\ \ \mathrm{for\ all}\ t\in [0, T],
\end{equation}
\begin{equation}\label{eq38.4}
\int_0^T\ ||\varphi_\delta||^2_\infty\ dt\leq C_7(T),\ \ \mathrm{for\ all}\ t\in [0, T].
\end{equation}
\end{lemma}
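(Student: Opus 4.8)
The plan is to replace the $L^2$ estimate used in the bistable case by the entropy $F(u)=u\log u$, keeping the same cancellation structure between the two equations of \eqref{mc4}. I would multiply the first equation in \eqref{mc4} by $F'(u_\delta)=\log u_\delta+1$ and integrate over $(-1,1)$. Using the no-flux condition $\partial_x u_\delta(t,\pm1)=0$, one integration by parts turns the diffusion term into
\[
\delta\int_{-1}^1 F'(u_\delta)\,\partial_x^2 u_\delta\,dx=-\delta\int_{-1}^1\frac{|\partial_x u_\delta|^2}{u_\delta}\,dx=-4\,\delta\int_{-1}^1|\partial_x\sqrt{u_\delta}|^2\,dx,
\]
which is exactly the dissipation term in \eqref{eq19.4}; using in addition $\varphi_\delta(t,\pm1)=0$ and $F''(u)=1/u$, the convective term $-\int_{-1}^1 F'(u_\delta)\,\partial_x(u_\delta\,\varphi_\delta)\,dx$ collapses to $\int_{-1}^1\varphi_\delta\,\partial_x u_\delta\,dx$.

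The cancellation is then produced exactly as in the bistable case: testing the elliptic equation $-\varepsilon\,\partial_x^2\varphi_\delta+\varphi_\delta=-\partial_x u_\delta$ with $\varphi_\delta$ gives $\varepsilon\,||\partial_x\varphi_\delta||_2^2+||\varphi_\delta||_2^2=-\int_{-1}^1\varphi_\delta\,\partial_x u_\delta\,dx$, so that the convective term equals $-\varepsilon\,||\partial_x\varphi_\delta||_2^2-||\varphi_\delta||_2^2$. Collecting terms yields the entropy identity
\[
\frac{\mathrm{d}}{\mathrm{d}t}\int_{-1}^1 u_\delta\log u_\delta\,dx+4\,\delta\int_{-1}^1|\partial_x\sqrt{u_\delta}|^2\,dx+\varepsilon\,||\partial_x\varphi_\delta||_2^2+||\varphi_\delta||_2^2=r\int_{-1}^1(\log u_\delta+1)\,u_\delta\,(1-u_\delta)\,dx.
\]
To close it, I would observe that the reaction integrand $h(u)=(\log u+1)\,u\,(1-u)$ is continuous on $(0,\infty)$, tends to $0$ as $u\to0^+$ and to $-\infty$ as $u\to+\infty$, hence is bounded above by an absolute constant $M$; this makes the right-hand side at most $2rM$, uniformly in $\delta$. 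Integrating in time and using the elementary lower bound $u\log u\ge-1/e$ together with the finiteness of $\int_{-1}^1 u_0\log u_0\,dx$ (guaranteed by $u_0\in W^{1,2}(-1,1)\hookrightarrow L^\infty(-1,1)$) gives \eqref{eq19.4}. Then \eqref{eq38.4} follows from the embedding $W^{1,2}(-1,1)\hookrightarrow L^\infty(-1,1)$, which yields $||\varphi_\delta||_\infty^2\le C(||\varphi_\delta||_2^2+||\partial_x\varphi_\delta||_2^2)$, and a time integration against \eqref{eq19.4} (with $\varepsilon$ fixed).

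The main obstacle is that $\log u_\delta$ is singular wherever $u_\delta$ vanishes, so the manipulations above are not literally licit for a merely nonnegative solution. I would make them rigorous by carrying out the computation with the regularized entropy $F_\eta(u)=(u+\eta)\log(u+\eta)$, $\eta>0$, whose first and second derivatives are bounded on the range of the (smooth, $W^{2,2}$-regular) solution $u_\delta(t)$, so that every integration by parts is justified. The diffusion term then reads $-\delta\int_{-1}^1|\partial_x u_\delta|^2/(u_\delta+\eta)\,dx$ and the convective term $\int_{-1}^1 u_\delta\,\varphi_\delta\,\partial_x u_\delta/(u_\delta+\eta)\,dx$, and I would pass to the limit $\eta\to0$ by dominated convergence, the domination being provided by $|\partial_x u_\delta|^2\in L^1$ and $|\varphi_\delta|\,|\partial_x u_\delta|\in L^1$ (together with $\partial_x u_\delta=0$ a.e. on $\{u_\delta=0\}$). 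Verifying that the cancellation with the elliptic equation survives this limit is the one point that deserves genuine care; everything else, including the uniform $L^1$ control coming from \eqref{eq15.4}, is routine.
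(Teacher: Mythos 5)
Your proposal is correct and follows essentially the same route as the paper: multiply the first equation by $\log u_\delta+1$, test the elliptic equation with $\varphi_\delta$, use the cancellation of $\int\varphi_\delta\,\partial_x u_\delta\,dx$, bound the reaction term from above, and integrate in time (the paper uses the slightly more explicit bounds $u_\delta(1-u_\delta)\log u_\delta\le 0$ and $u_\delta(1-u_\delta)\le 1$ where you invoke continuity and limits of $h$). Your regularization via $F_\eta(u)=(u+\eta)\log(u+\eta)$ and the explicit lower bound $u\log u\ge -1/e$ add rigor that the paper's formal computation omits, but the underlying argument is the same.
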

\begin{proof}
The proof goes as follows. On the one hand, we multiply the first equation in \eqref{mc4} by $(\log u_\delta+1)$ and integrate it over $(-1,1)$. Since $u_\delta\ (1-u_\delta)\ \log u_\delta\leq 0$ and $u_\delta\ (1-u_\delta)\leq 1$,
\begin{eqnarray}
\frac{\mathrm{d}}{\mathrm{d}t}\int_{-1}^1 u_\delta\ \log u_\delta\ dx&=& -\int_{-1}^1 (\delta\ \partial_x u_\delta-u_\delta\ \varphi_\delta)\ (\frac{1}{u_\delta}\ \partial_x u_\delta)\ dx\nonumber\\
&+&r\ \int_{-1}^1 u_\delta\ (1-u_\delta)\ (\log u_\delta+1)\ dx\nonumber\\
&\leq& -\int_{-1}^1 \frac{\delta}{u_\delta}\ (\partial_x u_\delta)^2\ dx+\int_{-1}^1\varphi_\delta\ \partial_xu_\delta\ dx+2\ r.\label{eq40.4}
\end{eqnarray}
On the other hand, we multiply the second equation in \eqref{mc4} by $\varphi_\delta$ and integrate it over $(-1,1)$ to obtain
\begin{equation}\label{eq17.4}\varepsilon \int_{-1}^1 |\partial_x \varphi_\delta|^2\ dx+\int_{-1}^1 |\varphi_\delta|^2\ dx=-\int_{-1}^1 \partial_x u_\delta\ \varphi_\delta\ dx.\end{equation}
Adding \eqref{eq40.4} and \eqref{eq17.4} yields
\begin{equation}\label{eq18.4}
\frac{\mathrm{d}}{\mathrm{d}t}\int_{-1}^1 u_\delta\ \log u_\delta\ dx+\varepsilon\ ||\partial_x\varphi_\delta||_2^2+||\varphi_\delta||_2^2\leq -4\ \delta\int_{-1}^1 |\partial_x \sqrt{u_\delta}|^2\ dx+2\ r.
\end{equation}
Then, \eqref{eq19.4} is obtained by a time integration of \eqref{eq18.4}. Finally, by the continuous embedding of $W^{1,2}(-1,1)$ in $L^\infty(-1,1)$ we obtain \eqref{eq38.4}.
\end{proof}
\begin{lemma}\label{le6.4}
For $0<\delta<1$, there exists $C_8(T)>0$ independent of $\delta$ such that
\begin{equation}
\partial_x \varphi_\delta (x)\geq -4\ ||\varphi_\delta||_\infty-C_8(T)\ \ \mathrm{for\ all}\ t\in [0,T].
\end{equation}
\end{lemma}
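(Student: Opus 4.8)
The plan is to follow essentially the same strategy as the proof of Lemma \ref{le1.4}, the task being in fact easier because the right-hand side of the elliptic equation in \eqref{mc4} is now linear in $u_\delta$. First I would fix $x,y\in(-1,1)$ and integrate the second equation in \eqref{mc4} over $(y,x)$. Using $\int_y^x \partial_x^2\varphi_\delta(z)\,dz=\partial_x\varphi_\delta(x)-\partial_x\varphi_\delta(y)$ together with $\int_y^x(-\partial_x u_\delta)\,dz=-(u_\delta(x)-u_\delta(y))$, this produces
$$-\varepsilon\bigl(\partial_x\varphi_\delta(x)-\partial_x\varphi_\delta(y)\bigr)+\int_y^x\varphi_\delta(z)\,dz=-\bigl(u_\delta(x)-u_\delta(y)\bigr).$$

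Next I would integrate this identity with respect to $y$ over $(-1,1)$. The boundary condition $\varphi_\delta(t,\pm1)=0$ yields $\int_{-1}^1\partial_x\varphi_\delta(y)\,dy=\varphi_\delta(1)-\varphi_\delta(-1)=0$, so that term drops out; and since $u_\delta\geq0$ we have $\int_{-1}^1 u_\delta(y)\,dy=||u_\delta||_1$. Solving for $\partial_x\varphi_\delta(x)$ then gives the closed expression
$$\partial_x\varphi_\delta(x)=\frac{1}{2\,\varepsilon}\int_{-1}^1\!\!\int_y^x\varphi_\delta(z)\,dz\,dy+\frac{1}{\varepsilon}\,u_\delta(x)-\frac{1}{2\,\varepsilon}\,||u_\delta||_1.$$

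The lower bound now follows from three elementary observations, none of which requires the completing-the-square argument that was needed in the bistable case. The nonnegativity of $u_\delta$ gives $\tfrac1\varepsilon u_\delta(x)\geq0$; the crude estimate $\int_{-1}^1\int_y^x\varphi_\delta(z)\,dz\,dy\geq-\int_{-1}^1\int_{-1}^1||\varphi_\delta||_\infty\,dz\,dy=-4\,||\varphi_\delta||_\infty$ controls the double integral; and \eqref{eq15.4} yields $||u_\delta||_1\leq||u_0||_1+2\,r\,T$, so that $-\tfrac1{2\varepsilon}||u_\delta||_1$ is bounded below by a constant $-C_8(T)$ independent of $\delta$. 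Combining the three estimates produces $\partial_x\varphi_\delta(x)\geq-\tfrac2\varepsilon||\varphi_\delta||_\infty-C_8(T)$, which is the asserted inequality (the precise value of the constant multiplying $||\varphi_\delta||_\infty$ being immaterial for the subsequent use).

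I do not anticipate any genuine difficulty here; this is the monostable counterpart of Lemma \ref{le1.4} with a strictly simpler, linear right-hand side. The only point to respect is that every constant produced, namely the factor multiplying $||\varphi_\delta||_\infty$ and the bound on $||u_\delta||_1$, be independent of $\delta$. This is automatic, since the integration identity contains no $\delta$ at all and \eqref{eq15.4} is uniform in $\delta$. Such $\delta$-uniformity is exactly what will later be needed when this pointwise lower bound is combined with the $L^2$-in-time control of $||\varphi_\delta||_\infty$ coming from Lemma \ref{le5.4}.
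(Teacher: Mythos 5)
Your proposal is correct and follows exactly the paper's own argument: integrate the elliptic equation over $(y,x)$, integrate again in $y$ over $(-1,1)$ using $\varphi_\delta(t,\pm1)=0$, then bound the double integral by $-4\,\|\varphi_\delta\|_\infty$, drop the nonnegative term $\tfrac{1}{\varepsilon}u_\delta(x)$, and control $\|u_\delta\|_1$ via \eqref{eq15.4}. Even your closing remark matches the paper, whose proof likewise ends with the constant $\tfrac{2}{\varepsilon}$ in front of $\|\varphi_\delta\|_\infty$ rather than the $4$ appearing in the lemma statement, a harmless discrepancy since only $\delta$-uniformity matters for the later use in Lemma \ref{le7.4}.
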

\begin{proof}
For $x,y \in (-1,1)$, we integrate the second equation in \eqref{mc4} to obtain
\begin{eqnarray}
-\varepsilon\ \int_y^x \partial^2_{x}\varphi_\delta(z)\ dz+\int_y^x \varphi_\delta(z)\ dz&=&-\int_y^x \partial_xu_\delta\ dz\nonumber\\
-\varepsilon\ \left(\partial_x \varphi_\delta(x)-\partial_x \varphi_\delta(y)\right)+\int_y^x \varphi_\delta(z)\ dz&=&-[u_\delta(x)-u_\delta(y)].\nonumber
\end{eqnarray}
Next we integrate the above equality with respect to $y$ over $(-1,1)$ to obtain
\begin{equation*}
-2\ \varepsilon\ \partial_x \varphi_\delta(x)=-\int_{-1}^1\int_y^x \varphi_\delta(z)\ dzdy-2\ u_\delta(x)+||u_\delta||_1.
\end{equation*}
Since 
\begin{eqnarray*}
\int_{-1}^1\int_y^x \varphi_\delta(z)\ dzdy
&\geq& -4\ ||\varphi_\delta||_\infty,
\end{eqnarray*}
and  $ u_\delta\geq 0,$ it follows from \eqref{eq15.4} that
\begin{eqnarray}
\partial_x \varphi_\delta(x)&=&\frac{1}{2\ \varepsilon}\int_{-1}^1\int_y^x \varphi_\delta(z)\ dzdy+\frac{1}{\varepsilon}\ u_\delta(x)-\frac{1}{2\ \varepsilon}\ ||u_\delta||_1\nonumber\\
&\geq& -\frac{2}{\varepsilon} \ ||\varphi_\delta||_\infty-C_8(T).\nonumber
\end{eqnarray}
\end{proof}
\begin{lemma}\label{le21.4}
There is $C_9(T)>0$ independent of $\delta$ such that
\begin{equation}\label{eq71.4}
||\varphi_\delta(t)||_2\leq C_9(T),\ \ \mathrm{for\ all}\ t\in [0,T].
\end{equation}
\end{lemma}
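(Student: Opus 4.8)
The plan is to reproduce, almost verbatim, the duality argument of Lemma~\ref{le20.4}, the only difference being that the affine form $E(u)=1-u$ lets us close the estimate using the $L^1$ bound \eqref{eq15.4} rather than an $L^2$ bound. First I would introduce the auxiliary potential $\psi_\delta\in L^2(0,T;W^{2,2}(-1,1))$ defined, for each $t$, as the unique solution of the Poisson problem
\begin{equation*}
-\partial_x^2\psi_\delta=\varphi_\delta\ \ \mathrm{in}\ (-1,1),\qquad \psi_\delta(t,\pm1)=0,
\end{equation*}
so that $||\varphi_\delta||_2=||\partial_x^2\psi_\delta||_2$ and $\psi_\delta$ inherits homogeneous boundary values.

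Next I would multiply the second equation in \eqref{mc4} by $\psi_\delta$ and integrate over $(-1,1)$. Integrating the term $-\varepsilon\int\partial_x^2\varphi_\delta\,\psi_\delta\,dx$ by parts twice, all boundary contributions vanish because both $\varphi_\delta$ and $\psi_\delta$ are zero at $x=\pm1$; using $\partial_x^2\psi_\delta=-\varphi_\delta$ in the remaining terms this yields
\begin{equation*}
\varepsilon\,||\partial_x^2\psi_\delta||_2^2+||\partial_x\psi_\delta||_2^2=\int_{-1}^1 u_\delta\,\partial_x\psi_\delta\,dx\leq ||u_\delta||_1\,||\partial_x\psi_\delta||_\infty,
\end{equation*}
where the right-hand side follows from integrating $\int(-\partial_xu_\delta)\,\psi_\delta\,dx$ by parts once, the boundary term disappearing since $\psi_\delta(t,\pm1)=0$.

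Then I would invoke the continuous embedding of $W^{1,2}(-1,1)$ into $L^\infty(-1,1)$ to bound $||\partial_x\psi_\delta||_\infty\leq C\,||\partial_x\psi_\delta||_{W^{1,2}}$, and absorb the factor $||\partial_x\psi_\delta||_{W^{1,2}}$ into the left-hand side, whose two terms together control $||\partial_x\psi_\delta||_{W^{1,2}}^2$ up to a constant depending on $\varepsilon$. This gives $||\partial_x\psi_\delta||_{W^{1,2}}\leq C\,||u_\delta||_1$, and the $L^1$ bound \eqref{eq15.4} shows the right-hand side is bounded by a constant $C_9(T)$ independent of $\delta$. Since $||\varphi_\delta||_2=||\partial_x^2\psi_\delta||_2\leq||\partial_x\psi_\delta||_{W^{1,2}}$, the claim follows.

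The step I expect to require the most care is verifying that the available a priori information actually suffices. In the bistable setting the analogous Lemma~\ref{le20.4} relied on the $L^\infty(0,T;L^2)$ bound \eqref{dxu4}, which is not at our disposal here: in the monostable case only the weaker $L^\infty(0,T;L\log L)$ control of Lemma~\ref{le5.4}, together with the $L^1$ bound \eqref{eq15.4}, is available. What makes the argument go through is precisely the linearity of $E$: because $\partial_xE(u_\delta)=-\partial_xu_\delta$ enters the duality estimate only linearly, it suffices to control $||u_\delta||_1$ rather than $||u_\delta||_2$, so the absence of an $L^2$ estimate is not an obstruction.
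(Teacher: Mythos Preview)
Your argument is correct and is exactly the duality approach the paper intends: the proof of Lemma~\ref{le20.4} carried over verbatim, with the quadratic bound $||E(u_\delta)||_1\leq C(1+||u_\delta||_2^2)$ replaced by the linear bound coming from $E(u)=1-u$ and the $L^1$ estimate \eqref{eq15.4}. Your observation that the linearity of $E$ is precisely what allows closing the estimate with $||u_\delta||_1$ rather than $||u_\delta||_2$ is the right point.
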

\begin{proof}
The proof is similar to that of Lemma \ref{le20.4}.
\end{proof}
Now, we continue with estimates for the derivatives of $u_\delta$.
\begin{lemma}\label{le7.4}
There is $C_{10}(T)>0$ independent of $\delta$ such that
\begin{equation}\label{eq20.4}
||\partial_x u_\delta(t)||_1\leq C_{10}(T)\ \ \mathrm{for\ all}\ t\in [0,T].
\end{equation}
\end{lemma}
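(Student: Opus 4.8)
The plan is to follow the same strategy as in the proof of Lemma~\ref{le2.4}, the monostable nonlinearity making the argument in fact slightly simpler. Writing $g_\delta=\partial_x u_\delta$ and differentiating the first equation of \eqref{mc4} with respect to $x$ gives
\[
\partial_t g_\delta=\delta\,\partial_x^2 g_\delta-\partial_x(g_\delta\,\varphi_\delta)-g_\delta\,\partial_x\varphi_\delta-u_\delta\,\partial_x^2\varphi_\delta+r\,(1-2u_\delta)\,g_\delta .
\]
The second equation of \eqref{mc4} lets me eliminate the second-order term in $\varphi_\delta$ through $\partial_x^2\varphi_\delta=\varepsilon^{-1}(\varphi_\delta+g_\delta)$, so that $-u_\delta\,\partial_x^2\varphi_\delta=-\varepsilon^{-1}u_\delta\varphi_\delta-\varepsilon^{-1}u_\delta g_\delta$.

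Next I would multiply this identity by the regularized sign function $\sigma_\gamma(g_\delta)$ introduced in the proof of Lemma~\ref{le2.4} and integrate over $(-1,1)$. As there, the diffusion term contributes $-\delta\int(\partial_x g_\delta)^2\sigma'_\gamma(g_\delta)\,dx\le 0$ and is discarded, while the transport term, after one integration by parts (the boundary terms vanishing since $\varphi_\delta(t,\pm1)=0$), produces $-\int f_\gamma(g_\delta)\,\partial_x\varphi_\delta\,dx$, which tends to $0$ as $\gamma\to0$ because $f_\gamma\to0$ uniformly. Passing to the limit $\gamma\to0$ then yields a differential inequality for $\|g_\delta\|_1=\int_{-1}^1|g_\delta|\,dx$.

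The estimation of the surviving terms is where the structure of the problem enters. The key observation is that the coupling term arising from $-\varepsilon^{-1}u_\delta g_\delta$ equals $-\varepsilon^{-1}\int u_\delta|g_\delta|\,dx\le 0$ by the nonnegativity of $u_\delta$, and may simply be dropped; this is precisely what replaces the more delicate sign analysis of \eqref{eq4.4} needed in the bistable case. The remaining terms are controlled as follows: Lemma~\ref{le6.4} gives $-\int|g_\delta|\,\partial_x\varphi_\delta\,dx\le\big(4\,\|\varphi_\delta\|_\infty+C_8(T)\big)\|g_\delta\|_1$; the term $-\varepsilon^{-1}\int\mathrm{sign}(g_\delta)\,u_\delta\varphi_\delta\,dx$ is bounded by $\varepsilon^{-1}\|\varphi_\delta\|_\infty\|u_\delta\|_1\le\varepsilon^{-1}C(T)\|\varphi_\delta\|_\infty$ using \eqref{eq15.4}; and the reaction term satisfies $r\int(1-2u_\delta)|g_\delta|\,dx\le r\|g_\delta\|_1$ since $u_\delta\ge0$ forces $1-2u_\delta\le1$. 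Collecting everything leads to
\[
\frac{\mathrm d}{\mathrm dt}\|g_\delta\|_1\le\big(4\,\|\varphi_\delta\|_\infty+C_8(T)+r\big)\,\|g_\delta\|_1+\frac{C(T)}{\varepsilon}\,\|\varphi_\delta\|_\infty .
\]

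The main point, and the only real obstacle, is that the coefficient of $\|g_\delta\|_1$ is not bounded uniformly in $t$, so Gronwall cannot be applied with an $L^\infty$-in-time bound on $\|\varphi_\delta\|_\infty$. However, by \eqref{eq38.4} and the Cauchy--Schwarz inequality, both $\int_0^T\big(4\,\|\varphi_\delta\|_\infty+C_8(T)+r\big)\,dt$ and $\int_0^T\varepsilon^{-1}C(T)\|\varphi_\delta\|_\infty\,dt$ are bounded by a constant depending only on $T$ and $\varepsilon$, uniformly in $\delta\in(0,1)$. Gronwall's lemma in integral form, together with $\|g_\delta(0)\|_1=\|\partial_x u_0\|_1<\infty$ (recall $u_0\in W^{1,2}(-1,1)\subset W^{1,1}(-1,1)$), then gives $\|\partial_x u_\delta(t)\|_1\le C_{10}(T)$ for all $t\in[0,T]$, as claimed.
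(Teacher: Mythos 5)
Your proof is correct and follows essentially the same route as the paper: differentiate the equation, test with the regularized sign $\sigma_\gamma(g_\delta)$, eliminate $\partial_x^2\varphi_\delta$ via the elliptic equation, drop the sign-favorable term $-\varepsilon^{-1}\int\sigma_\gamma(g_\delta)\,u_\delta\,g_\delta\,dx\le0$, invoke Lemma~\ref{le6.4} for the lower bound on $\partial_x\varphi_\delta$, and close with Gronwall using the time-integrability of $\|\varphi_\delta\|_\infty$ from \eqref{eq38.4}. Your final step is in fact slightly more explicit than the paper's (which compresses the integral-form Gronwall argument and the Cauchy--Schwarz reduction of \eqref{eq38.4} into one sentence), but the substance is identical.
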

\begin{proof}
Differentiating the first equation in \eqref{mc4} with respect to $x$ and setting $g_\delta=\partial_x u_\delta$ yield
\begin{equation}\label{eq21.4}
\partial_t (g_\delta)+\partial_{x}^2 (u_\delta\ \varphi_\delta)-r\ \partial_x\left(u_\delta\ (1-u_\delta)\right)=\delta\ \partial^2_{x} g_\delta.
\end{equation}
We define as in the bistable case an approximation of the sign function by $\sigma_\gamma(z)=\sigma(\frac{z}{\gamma})$, $0<\gamma\ll 1$, with $\sigma$ smooth and increasing, $\sigma(0)=0$, and $\sigma(z)=\mathrm{sign}\ z$ for $|z|>1$. Then, with $\mathrm{abs}_\gamma(z)=\int_0^z\sigma_\gamma(\xi)\ d\xi$, the convergence of $\mathrm{abs}_\gamma(z)$ to $|z|$ as $\gamma\rightarrow 0$ is uniform in $z\in \mathbb{R}$.\\
Multiplying \eqref{eq21.4} by $\sigma_\gamma(g_\delta)$ and integrating with respect to $x$ yields
\begin{eqnarray}
\int_{-1}^1 \sigma_\gamma(g_\delta)\ \partial_t (g_\delta)\ dx&+&\int_{-1}^1\sigma_\gamma(g_\delta)\ [\partial_{x}(g_\delta\ \varphi_\delta)+g_\delta\ \partial_x\varphi_\delta+u_\delta\ \partial^2_{x}\varphi_\delta]\ dx\nonumber\\
&-&r\int_{-1}^1 \sigma_\gamma(g_\delta)\ (1-2\ u_\delta)\ g_\delta\ dx\nonumber\\
&=&\delta\ \int_{-1}^1 \sigma_\gamma(g_\delta)\ \partial_{x}^2 g_\delta\ dx.\label{eq22.4}
\end{eqnarray}
Since
\begin{equation*}
\int_{-1}^1\sigma_\gamma(g_\delta)\ \partial_{x}( g_\delta\ \varphi_\delta)\ dx=-\int_{-1}^1\sigma'_\gamma(g_\delta)\ \partial_xg_\delta\ g_\delta\ \varphi_\delta\ dx
\end{equation*}
and
\begin{equation*}
\delta\ \int_{-1}^1 \sigma_\gamma(g_\delta)\ \partial_{x}^2 g_\delta\ dx=-\delta\ \int_{-1}^1 (\partial_xg_\delta)^2\ \sigma'_\gamma(g_\delta)\ dx
\end{equation*}
 we obtain
\begin{eqnarray}
\frac{\mathrm{d}}{\mathrm{d}t}\int_{-1}^1 \mathrm{abs}_\gamma(g_\delta)\ dx&-&\int_{-1}^1\sigma'_\gamma(g_\delta)\ \partial_xg_\delta\ g_\delta\ \varphi_\delta\ dx\nonumber\\
&+&\int_{-1}^1\sigma_\gamma(g_\delta)\ g_\delta\ \partial_x\varphi_\delta\ dx+\int_{-1}^1\sigma_\gamma(g_\delta)\ u_\delta\ \partial^2_{x}\varphi_\delta\ dx\nonumber\\
&-&r\ \int_{-1}^1 \sigma_\gamma(g_\delta)\ (1-2\ u_\delta)\ g_\delta\ dx\nonumber\\
&=&-\delta\ \int_{-1}^1 (\partial_xg_\delta)^2\ \sigma'_\gamma(g_\delta)\ dx\leq 0.\label{eq23.4}
\end{eqnarray}
The function $f_\gamma(z)=\sigma_\gamma(z)\ z-\mathrm{abs}_\gamma (z)$ satisfies $f'_\gamma(z)=\sigma'_\gamma(z)\ z$ and converges to $0$ uniformly in $z\in \mathbb{R}$. We integrate the second term in \eqref{eq23.4} by parts and we use the second equation in \eqref{mc4} in the fourth one to obtain
\begin{eqnarray}
\frac{\mathrm{d}}{\mathrm{d}t}\int_{-1}^1 \mathrm{abs}_\gamma(g_\delta)\ dx&+&\int_{-1}^1 f_\gamma(g_\delta)\ \partial_x\varphi_\delta\ dx+\int_{-1}^1\sigma_\gamma(g_\delta)\ g_\delta\ \partial_x\varphi_\delta\ dx\nonumber\\
&\leq&-\frac{1}{\varepsilon}\int_{-1}^1\sigma_\gamma(g_\delta)\ u_\delta\  g_\delta\ dx-\frac{1}{\varepsilon}\int_{-1}^1\sigma_\gamma(g_\delta)\ u_\delta\ \varphi_\delta\  dx\nonumber\\
&+&r\int_{-1}^1 \sigma_\gamma(g_\delta)\ (1-2\  u_\delta)\ g_\delta\ dx.\nonumber\\
&\leq&\frac{1}{\varepsilon} \ ||u_\delta||_1\ ||\varphi_\delta||_\infty+r\ \int_{-1}^1 |g_\delta|\ dx\nonumber,
\end{eqnarray}
since $u_\delta\geq 0$, $1-2u_\delta\leq1$ and $0\leq \sigma_\gamma(g_\delta)\ g_\delta\leq |g_\delta|$.
Passing to the limit $\gamma\rightarrow 0$ in the above inequality, the second term on the left-hand side vanishes. It follows from Lemma~\ref{le6.4} and \eqref{eq15.4} that
\begin{eqnarray}
\frac{\mathrm{d}}{\mathrm{d}t}\int_{-1}^1 |g_\delta|\ dx&-&\left(\frac{2}{\varepsilon}\ ||\varphi_\delta||_\infty+C_8(T)\right)\ \int_{-1}^1 |g_\delta|\ dx\nonumber\\
&\leq&\frac{C(T)}{\varepsilon}\ ||\varphi_\delta||_\infty+r\ \int_{-1}^1 |g_\delta|\ dx.\label{eq25.4}
\end{eqnarray}
Integrating \eqref{eq25.4} in time, and using \eqref{eq38.4} yield that there exists $C_{10}(T)$ such that \eqref{eq20.4} holds.
\end{proof}
As in the previous section, we have the following consequence of Lemma \ref{le21.4} and Lemma~\ref{le7.4}.
\begin{lemma}\label{le8.4}
There is $C_{11}(T)>0$ independent of $\delta$ such that
\begin{equation}\label{eq39.4}
||\partial_x\varphi_\delta(t)||_\infty+||u_\delta(t)||_\infty\leq C_{11}(T)\ \ \mathrm{for\ all}\ t\in [0,T].
\end{equation}
\end{lemma}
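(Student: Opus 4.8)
The plan is to mirror the proof of Lemma~\ref{le3.4}, the bistable analogue: first convert the $W^{1,1}$ control already available on $u_\delta$ into an $L^\infty$ bound, and then feed this, together with the $L^2$ bound on $\varphi_\delta$, back into the elliptic equation to control $\partial_x\varphi_\delta$. Because the right-hand side of the second equation in \eqref{mc4} is the bare derivative $-\partial_x u_\delta$, with no $u_\delta$ prefactor, the second step is in fact slightly simpler here than in the bistable case.

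\textbf{Step 1 (bound on $\|u_\delta\|_\infty$).} First I would combine the $L^1$ estimate \eqref{eq15.4}, which gives $\|u_\delta(t)\|_1\le\|u_0\|_1+2\,r\,T$ for $t\in[0,T]$, with Lemma~\ref{le7.4}, which gives $\|\partial_x u_\delta(t)\|_1\le C_{10}(T)$. Adding these yields a bound on $\|u_\delta(t)\|_{W^{1,1}}$ uniform in $\delta$ and $t\in[0,T]$, and the continuous embedding of $W^{1,1}(-1,1)$ in $L^\infty(-1,1)$ then produces the claimed bound on $\|u_\delta(t)\|_\infty$.

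\textbf{Step 2 (bound on $\|\partial_x\varphi_\delta\|_\infty$).} Rewriting the elliptic equation in \eqref{mc4} as $\varepsilon\,\partial_x^2\varphi_\delta=\varphi_\delta+\partial_x u_\delta$ and taking $L^1$ norms, I would estimate
\[
\varepsilon\,\|\partial_x^2\varphi_\delta(t)\|_1\le\|\varphi_\delta(t)\|_1+\|\partial_x u_\delta(t)\|_1\le\sqrt{2}\,\|\varphi_\delta(t)\|_2+\|\partial_x u_\delta(t)\|_1,
\]
so that Lemma~\ref{le21.4} and Lemma~\ref{le7.4} bound $\|\partial_x^2\varphi_\delta(t)\|_1$ uniformly in $\delta$ and $t\in[0,T]$. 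Since $\varphi_\delta(t)\in W^{2,2}(-1,1)\hookrightarrow C^1([-1,1])$ vanishes at $x=\pm1$, Rolle's theorem provides $x_0\in(-1,1)$ with $\partial_x\varphi_\delta(t,x_0)=0$; integrating $\partial_x^2\varphi_\delta$ from $x_0$ then gives $\|\partial_x\varphi_\delta(t)\|_\infty\le\|\partial_x^2\varphi_\delta(t)\|_1$. In particular $\partial_x\varphi_\delta(t)$ is bounded in $W^{1,1}(-1,1)$, and the embedding $W^{1,1}(-1,1)\hookrightarrow L^\infty(-1,1)$ again yields the bound. Combining the two steps gives \eqref{eq39.4}.

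There is no serious obstacle: all ingredients have been prepared by \eqref{eq15.4}, Lemma~\ref{le7.4} and Lemma~\ref{le21.4}, and the argument is essentially identical to that of Lemma~\ref{le3.4}. The only point requiring a little care is the passage from the $L^1$ bound on $\partial_x^2\varphi_\delta$ to the $L^\infty$ bound on $\partial_x\varphi_\delta$: one cannot simply invoke $W^{1,1}\hookrightarrow L^\infty$ without first knowing $\partial_x\varphi_\delta\in L^1$, which is why I exploit the zero of $\partial_x\varphi_\delta$ guaranteed by the boundary conditions (equivalently, the lower bound of Lemma~\ref{le6.4} together with the vanishing mean $\int_{-1}^1\partial_x\varphi_\delta\,dx=0$) to obtain the sup-norm control directly.
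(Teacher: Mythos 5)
Your proof is correct and follows essentially the same route as the paper, which disposes of this lemma in one line by noting it is a consequence of Lemma~\ref{le21.4} and Lemma~\ref{le7.4} exactly as in the bistable case (Lemma~\ref{le3.4}): a uniform $W^{1,1}$ bound plus the embedding $W^{1,1}(-1,1)\hookrightarrow L^\infty(-1,1)$ for $u_\delta$, and the elliptic equation combined with the $L^1$ bounds on $\varphi_\delta$ and $\partial_x u_\delta$ for $\partial_x\varphi_\delta$. Your Rolle's-theorem argument, giving $\|\partial_x\varphi_\delta(t)\|_\infty\leq\|\partial_x^2\varphi_\delta(t)\|_1$ directly from the boundary conditions, is simply a careful justification of the step the paper states tersely as ``$\partial_x\varphi_\delta(t)$ is bounded in $W^{1,1}(-1,1)$'' before invoking the embedding.
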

\begin{lemma}\label{le12.4}
There is $C_{12}(T)>0$ independent of $\delta$ such that
\begin{equation}\label{eq68.4}
||\partial_xu_\delta(t)||_2\leq C_{12}(T)\ \ \mathrm{for\ all}\ t\in [0,T].
\end{equation}
\begin{proof}
Since \eqref{eq39.4} holds, we argue as in the proof of Lemma \ref{le10.4} to obtain \eqref{eq68.4}.
\end{proof}
\end{lemma}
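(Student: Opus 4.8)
The plan is to mirror the proof of Lemma~\ref{le10.4}, carrying out an $L^2$ energy estimate on $g_\delta=\partial_x u_\delta$. First I would start from the differentiated equation \eqref{eq21.4}; expanding $\partial_x^2(u_\delta\,\varphi_\delta)=\partial_x g_\delta\,\varphi_\delta+2\,g_\delta\,\partial_x\varphi_\delta+u_\delta\,\partial_x^2\varphi_\delta$ and $\partial_x(u_\delta(1-u_\delta))=(1-2u_\delta)\,g_\delta$, this recasts \eqref{eq21.4} as
\begin{equation*}
\partial_t g_\delta=-\partial_x g_\delta\,\varphi_\delta-2\,g_\delta\,\partial_x\varphi_\delta-u_\delta\,\partial_x^2\varphi_\delta+r\,(1-2u_\delta)\,g_\delta+\delta\,\partial_x^2 g_\delta,
\end{equation*}
which is the exact analogue of \eqref{eq65.4} with the constant factor $E'(u)=-1$ in place of $E'(u)=-2u_\delta+a+1$. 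I would then multiply by $g_\delta$ and integrate over $(-1,1)$.

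Second, I would perform the same boundary-free integrations by parts as in Lemma~\ref{le10.4}: the diffusion term gives $\delta\int_{-1}^1 \partial_x^2 g_\delta\,g_\delta\,dx=-\delta\int_{-1}^1|\partial_x g_\delta|^2\,dx\le 0$, the boundary contribution dropping since $g_\delta(t,\pm1)=\partial_x u_\delta(t,\pm1)=0$; and the transport term becomes $-\int_{-1}^1\partial_x g_\delta\,\varphi_\delta\,g_\delta\,dx=\tfrac12\int_{-1}^1 g_\delta^2\,\partial_x\varphi_\delta\,dx$ after one integration by parts, the boundary term vanishing because $\varphi_\delta(t,\pm1)=0$. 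Combined with the term $-2\int_{-1}^1 g_\delta^2\,\partial_x\varphi_\delta\,dx$ this produces a factor $\tfrac32$ multiplying $\int_{-1}^1 g_\delta^2\,\partial_x\varphi_\delta\,dx$, controlled by $\tfrac32\,\|g_\delta\|_2^2\,\|\partial_x\varphi_\delta\|_\infty$.

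The decisive step is then to eliminate $\partial_x^2\varphi_\delta$ through the elliptic equation in \eqref{mc4}, namely $\partial_x^2\varphi_\delta=\tfrac{1}{\varepsilon}(\varphi_\delta+g_\delta)$, so that
\begin{equation*}
-\int_{-1}^1 u_\delta\,\partial_x^2\varphi_\delta\,g_\delta\,dx=-\frac{1}{\varepsilon}\int_{-1}^1 u_\delta\,\varphi_\delta\,g_\delta\,dx-\frac{1}{\varepsilon}\int_{-1}^1 u_\delta\,g_\delta^2\,dx .
\end{equation*}
The last integral is nonpositive because $u_\delta\ge 0$ and may simply be discarded. I would then bound each remaining term by its $L^\infty$ coefficient, using the uniform estimates on $\|u_\delta\|_\infty$ and $\|\partial_x\varphi_\delta\|_\infty$ furnished by \eqref{eq39.4}, the bound on $\|\varphi_\delta\|_2$ from Lemma~\ref{le21.4}, and Young's inequality to split the cross term $\tfrac{1}{\varepsilon}\|u_\delta\|_\infty\,\|\varphi_\delta\|_2\,\|g_\delta\|_2$. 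This produces a differential inequality of the form $\tfrac{d}{dt}\|g_\delta\|_2^2\le C\,\|g_\delta\|_2^2+C\,\|\varphi_\delta\|_2^2$, identical in structure to \eqref{eq66.4}.

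Finally, integrating in time and invoking $\int_0^T\|\varphi_\delta\|_2^2\,dt\le C_7(T)$ from \eqref{eq19.4}, Gronwall's lemma delivers the uniform bound \eqref{eq68.4}. I do not expect any genuine obstacle here: the coupling and reaction coefficients differ from the bistable case only in their explicit form, the lower-order structure is unchanged, and all the $L^\infty$ controls required have already been secured in \eqref{eq39.4} and Lemma~\ref{le21.4}; the favorable sign of $-\tfrac{1}{\varepsilon}\int_{-1}^1 u_\delta\,g_\delta^2\,dx$ is a convenient bonus rather than a necessity. The only point needing a little attention is verifying that all boundary terms in the integrations by parts vanish, which follows from the no-flux condition $\partial_x u_\delta(t,\pm1)=0$ and $\varphi_\delta(t,\pm1)=0$.
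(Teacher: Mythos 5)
Your proposal is correct and is precisely the argument the paper intends: its proof of Lemma~\ref{le12.4} simply says to repeat the proof of Lemma~\ref{le10.4}, and you have carried out that adaptation faithfully --- the analogue of \eqref{eq65.4} with $E'(u_\delta)=-1$, the same integrations by parts, the substitution $\partial_x^2\varphi_\delta=\tfrac1\varepsilon(\varphi_\delta+g_\delta)$, the $L^\infty$ controls from \eqref{eq39.4} and Lemma~\ref{le21.4}, and a Gronwall argument using the time-integrated bound on $\|\varphi_\delta\|_2^2$. The observation that $-\tfrac1\varepsilon\int_{-1}^1 u_\delta\,g_\delta^2\,dx\le 0$ can simply be dropped is a correct (and slightly cleaner) simplification of the corresponding step in the bistable case.
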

\begin{lemma}\label{le9.4}
There is $C_{13}(T)>0$ independent of $\delta$ such that
\begin{equation}\label{eq10.4}
 ||\partial_t u_\delta(t)||^2_{(W^{1,2})'}\leq C_{13}(T)\ \ \mathrm{for\ all}\ t\in [0,T].
\end{equation}
\end{lemma}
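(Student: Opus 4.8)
The plan is to mimic exactly the duality argument used for Lemma \ref{le4.4} in the bistable case, now using the first equation of \eqref{mc4} together with the uniform estimates already obtained in the monostable setting. The reaction term is simpler here ($r\,u_\delta\,(1-u_\delta)$ in place of $r\,u_\delta\,(1-u_\delta)(u_\delta-a)$), so no new difficulty arises from that quarter.

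First I would fix $t\in(0,T)$ and an arbitrary test function $\psi\in W^{1,2}(-1,1)$, and compute the duality pairing $\langle\partial_t u_\delta,\psi\rangle=\int_{-1}^1\partial_t u_\delta\,\psi\,dx$ by substituting the first equation in \eqref{mc4}. Writing $\partial_t u_\delta=\partial_x(\delta\,\partial_x u_\delta-u_\delta\,\varphi_\delta)+r\,u_\delta\,(1-u_\delta)$ and integrating by parts, the boundary contributions vanish thanks to $\partial_x u_\delta(t,\pm1)=\varphi_\delta(t,\pm1)=0$, yielding
\begin{equation*}
\int_{-1}^1\partial_t u_\delta\,\psi\,dx=\int_{-1}^1\bigl(-\delta\,\partial_x u_\delta\,\partial_x\psi+u_\delta\,\varphi_\delta\,\partial_x\psi+r\,u_\delta\,(1-u_\delta)\,\psi\bigr)\,dx.
\end{equation*}

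Next I would estimate each term by H\"older's inequality, obtaining
\begin{equation*}
\left\lvert\int_{-1}^1\partial_t u_\delta\,\psi\,dx\right\rvert\leq\delta\,||\partial_x u_\delta||_2\,||\partial_x\psi||_2+||u_\delta||_\infty\,||\varphi_\delta||_2\,||\partial_x\psi||_2+r\,||u_\delta\,(1-u_\delta)||_2\,||\psi||_2.
\end{equation*}
At this point I would invoke the uniform-in-$\delta$ bounds already at hand: the $L^2$ control of $\partial_x u_\delta$ from \eqref{eq68.4}, the $L^\infty$ control of $u_\delta$ from \eqref{eq39.4} (which also bounds $||u_\delta\,(1-u_\delta)||_2$), and the $L^2$ control of $\varphi_\delta$ from Lemma \ref{le21.4}. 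Since $0<\delta<1$, all coefficients are dominated by a constant depending only on $T$, so that $\lvert\langle\partial_t u_\delta,\psi\rangle\rvert\leq C(T)\,||\psi||_{W^{1,2}}$. Taking the supremum over $\psi$ with $||\psi||_{W^{1,2}}\leq1$ gives $||\partial_t u_\delta(t)||_{(W^{1,2})'}\leq C(T)$, and squaring yields \eqref{eq10.4}.

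I do not expect a genuine obstacle: the estimate is a routine duality bound that runs in complete parallel with Lemma \ref{le4.4}. The only point requiring care is simply that every ingredient be available uniformly in $\delta$ — the deepest of these being the $L^\infty((0,T);W^{1,2})$ bound on $u_\delta$ furnished by Lemma \ref{le12.4}, whose proof in turn rests on the lower bound for $\partial_x\varphi_\delta$ (Lemma \ref{le6.4}) and the $W^{1,1}$ estimate (Lemma \ref{le7.4}). Once these are in place the conclusion is immediate.
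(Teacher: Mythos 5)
Your proposal is correct and coincides with the paper's own proof: the same duality pairing, the same integration by parts against $\psi\in W^{1,2}(-1,1)$, the same H\"older estimates, and the same three ingredients (Lemma \ref{le21.4} for $\|\varphi_\delta\|_2$, Lemma \ref{le8.4} for $\|u_\delta\|_\infty$, Lemma \ref{le12.4} for $\|\partial_x u_\delta\|_2$), with $0<\delta<1$ absorbing the diffusion coefficient. Nothing to add.
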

\begin{proof}
Consider $\psi \in W^{1,2}(-1,1)$ and $t\in (0, T)$. We have by the first equation in \eqref{mc4}
\begin{eqnarray}
&&\left\lvert\int_{-1}^1 \partial_t u_\delta\ \psi\ dx\right\lvert\nonumber\\
&=& \left\lvert\int_{-1}^1 \left( -\delta\ \partial_x u_\delta\ \partial_x\psi+u_\delta\ \varphi_\delta\ \partial_x\psi +r\ u_\delta\ (1-u_\delta)\ \psi\right)\ dx\right\lvert\nonumber\\
&\leq& \delta\ ||\partial_x\psi||_2\ ||\partial_x u_\delta||_2+||\partial_x\psi||_2\ \ ||u_\delta||_\infty\ ||\varphi_\delta||_2+r\ ||u_\delta\ (1-u_\delta)||_2\ ||\psi||_2\nonumber.
\end{eqnarray} 
Using Lemma \ref{le21.4}, Lemma \ref{le8.4} and Lemma \ref{le12.4}, we end up with
\begin{eqnarray}
\left\lvert\int_{-1}^1\partial_t u_\delta\ \psi\ dx\right\lvert &\leq & C(T)\ ||\psi||_{W^{1,2}}\nonumber,
\end{eqnarray}
and a duality argument gives 
\begin{equation*}
||\partial_t u_\delta(t)||_{(W^{1,2})'}\leq C(T),\ \ t\in[0,T]
\end{equation*}
and the proof of Lemma \ref{le9.4} is complete.
\end{proof}
\subsection{Convergence}
\begin{proof}[Proof of Theorem \ref{th2.4}]
Thanks to the previous analysis, the proof of Theorem \ref{th2.4} can then be done as that of Theorem \ref{th1.4}. 
\end{proof}
\section*{Acknowledgment}
I thank Philippe Lauren\c cot for valuable and fruitful discussions.


\begin{thebibliography}{30}
\bibitem{globalintime}
J.A.~ Carrillo,  M.~DiFrancesco, A.~ Figalli, T.~ Laurent, D.~ Slep{\v c}ev. Global-in-time weak measure solutions and finite-time aggregation for nonlocal interaction equations. \emph{Duke Math. J. 156 (2011), no. 2, 229-271.} 
\bibitem{anintroduction}
T.~ Cazenave, A.~ Haraux. An Introduction to semilinear evolution equations. \emph{Oxford lecture series in mathematics and its applications, (2006).}
\bibitem{asimplified}
J.~P.~Dias. A simplified variational model for the bidimensional coupled evolution equations of a nematic liquid crystal. \emph{J. Math. Anal. Appl. 67 (1979), no. 2, 525-541.}
\bibitem{unprobleme}
J.~P.~Dias. Un probl\`eme aux limites pour un syst\`eme d'\'equations non lin\'eaires tridimensionnel. 
\emph{Bolletino, U. M.I. (5) 16-B (1979), 22-31.}
\bibitem{thekeller}
Y.~Dolak, C.~Schmeiser. The Keller-Segel model with logistic sensitivity function and small diffusivity. \emph{SIAM J. Appl. Math. 66 (2005), no. 1, 286-308}
\bibitem{models}
P.~Grindrod. Models of individual aggregation or clustering in single and multi-species communities.  \emph{J. Math. Biol. (1988) 26:651-660.}
\bibitem{localand}
T.~ Laurent. Local and global existence for an aggregation equation. \emph{Comm. Partial Differential Equations 32 (2007), no. 10-12, 1941-1964.}
\bibitem{asystem}
P.~Markowhich, P.~Szmolyan. A system of convection-diffusion equations with small diffusion coefficient arising in semiconductor physics. \emph{J. Diff. Eq. 81, (1989) 234-254}.
\bibitem{well}
E.~Nasreddine. Well-posedness for a model of individual clustering. \emph{ArXiv:1211.2969v1 [math.AP] (2012)}.
\bibitem{existenceof}
B.~ Perthame, A.L~ Dalibard.
Existence of solutions of the hyperbolic Keller-Segel model. 
\emph{Trans. Amer. Math. Soc. 361 (2009), no. 5, 2319-2335.}
\bibitem{finitetime}
M.~ Rascle, C.~ Ziti. Finite time blow-up in some models of chemotaxis. \emph{J. Math. Biol. 33 (1995), no. 4, 388-414.}
\bibitem{quelques}
M.~ Schoenauer. Quelques r\'esultats de r\'egularit\'e pour un syst\`eme elliptique avec conditions aux limites coupl\'ees. \emph{Annales de la Facult\'e des Sciences de Toulouse 5e s\'erie, tome 2, no. 2(1980), 125-135.}
\bibitem{compact}
J.~Simon. Compact sets in the space $L^p(0,T; B)$. \emph{ Annali di Mathematica Pura ed Applicata (IV), vol. CXLVI, (1987), 65-69.}
\end{thebibliography}
\end{document}